\newtheorem{theorem}{Theorem}
\newtheorem{lemma}{Lemma}
\newtheorem{cor}[theorem]{Corollary}%
\DeclareMathOperator*{\argmax}{arg\,max}
\begin{document}

\title{Parameter estimation for a hidden linear birth and death process with immigration  modeling disease propagation\thanks{This project was supported by the LabEx NUMEV (ANR 2011-LABX-076) within the I-SITE MUSE}}

\author{Ibrahim Bouzalmat\footnote{IMAG, Univ Montpellier, CNRS, Montpellier, France} \and Benoîte de Saporta\footnote{IMAG, Univ Montpellier, CNRS, Montpellier, France} \and Solym M. Manou-Abi\footnote{IMAG, Univ Montpellier, CNRS, Univ Mayotte, Montpellier, France}}

\date{}

\maketitle

\begin{abstract}
In this paper, we use a linear birth and death process with immigration to model infectious disease propagation when contamination stems from both person-to-person contact and contact with the environment. 
Our aim is to estimate the parameters of the process.
The main originality and difficulty comes from the observation scheme.
Counts of infected population are hidden.
The only data available are periodic cumulated new retired counts.
Although very common in epidemiology, this observation scheme is mathematically challenging even for such a standard stochastic process.
We first derive an analytic expression of the unknown parameters as functions of well-chosen discrete time transition probabilities.
Second, we extend and adapt the standard  Baum-Welch algorithm in order to estimate the said discrete time transition probabilities in our hidden data framework.
The performance of our estimators is illustrated both on synthetic data and real data of typhoid fever in Mayotte.
\end{abstract}

\section{Introduction}
%
Water  borne infectious  diseases \cite{ijerph7103657,ASHBOLT2004229} can be life-threatening and may carry infection to a large population in a very small duration. Typhoid fever, cholera, dysentery, poliomyelitis are, among others, the most frequent water borne diseases. Such diseases are a major health issue in most parts of the world and especially in developing countries. In the French administrated island of Mayotte, typhoid fever is a notifiable endemic disease with an average of 
$60$ cases per year between 2018 and 2022,  
under active surveillance of the Department of Safety and Health Emergencies (D\'eSUS) of the Regional Health Agency (ARS) of Mayotte. The disease is predominantly transmitted through contact with or consumption of microbially polluted water or food, see \cite{Salmonella,Salmonellabis,edelman1986summary}. 

Compartment models form a wide class of deterministic and random processes especially suitable to model disease propagation. A variety of such models are available in the literature for the transmission of typhoid fever with or without vaccination, see e.g. \cite{lauria2009optimization,mushayabasa2011impact,pitzer2014predicting,cvjetanovic1971epidemiological,watson2015review}.
In this paper, we focus on a simplified stochastic one-compartment model, where only counts of the infected population are taken into account. 
This stochastic model is a linear \emph{birth-and-death} process with \emph{immigration} (LBDI). Here \emph{immigration} corresponds to exogeneous contamination, or contamination from the environment; \emph{birth} corresponds to new person-to-person contamination; and \emph{death} corresponds to 
patient isolation (usually at hospital). 
Such processes belong to the family of Markov jump processes and are used in a wide variety of quantitative disciplines, such as evolutionary biology, epidemiology and queuing theory, see for instance \cite{codecco2008stochastic,novozhilov2006biological, nee2006birth,thorne1991evolutionary,crawford2014estimation}, and references therein.
This LBDI model can be described with only three scalar parameters: the \emph{immigration} (exogenous or environment contamination), \emph{birth} (person to person contamination) and \emph{death} 
(isolation) 
rates. Its asymptotic behavior is well-known as well as maximum likelihood estimators based on  both discrete and continuous observations if available observations include the jump times and post jump infected counts, see for instance  \cite{moran1953estimation,reynolds1973estimating,keiding1975maximum,perkins2009maximum}. 

However, in epidemiological practice continuous time or periodic counts of infected individuals are rarely available. For notifiable diseases, the most commonly collected data are cumulated new 
declaration 
counts over given periods, typically daily or weekly intervals. 
In our motivating example of typhoid fever, declaration is usually simultaneous to or quickly followed by hospitalisation and thus isolation of the patient.
Parameter inference for LBDI process in this observation framework is very challenging for two main reasons.

First, when observations are discrete, one needs to retrieve the transitions probabilities of infected counts over a given period. The immigration component invalidates the branching property, thus obtaining an analytical form for the transitions probabilities is not straightforward, and obtaining a numerical form may be liable to numerical instabilities \cite{immel1951problems}.
Several works have proposed approaches to estimate the parameters of discretely observed general birth and death processes. In \cite{crawford2014estimation,crawford2012transition}, the authors propose to use the theory of continued fractions to find the explicit expression of the Laplace transform of transition probabilities in order to consider the inference problem with discrete observations as a missing data problem by using the expectation-maximization (EM) algorithm to find maximum likelihood estimates. In \cite{xu2015likelihood}, the authors use the same EM algorithm method to propose a multi-type branching process approximation for birth-death shift processes. In \cite{davison2021parameter}, the authors propose to use two approaches to estimate the birth, death, and growth rates from a discretely observed linear birth-and-death process through an embedded Galton-Watson process and by maximizing a saddlepoint approximation to the likelihood. 

Second, in all the previously cited papers, observations are discrete but correspond to infected population counts. Retrieving infected counts from cumulated new isolated counts corresponds to a non standard hidden Markov model (HMM). Indeed, due to the underlying continuous-time dynamics, the cumulated new isolated over a period of time depend on both the infected counts at the beginning and at the end of the period. Therefore the emission distribution of the HMM 
depends on both the current and past states of the hidden chain. Hence the standard  Baum-Welch algorithm \cite{baum1966statistical} cannot be directly applied and has to be adapted to this specific case.

This paper addresses both challenges. First, we establish an analytical formula for the parameters of a LBDI process in terms of some discrete time transition probabilities. Then we adapt the Baum-Welch algorithm to our special form of emission distribution to estimate the transition probabilities of the hidden chain from the cumulated new isolated data. We finally obtain estimates of the original parameters by plug-in. Our results are illustrated both on synthetic and real data. The real data set concerns daily cumulated new declared confirmed cases of typhoid fever in Mayotte from 2018 to 2022. It was provided by the ARS. All codes were implemented in R and are available at \url{https://plmlab.math.cnrs.fr/bouzalma/hlbdi.git}.

The paper is organized as follows. We present the stochastic LBDI model and our specific observation framework in Section \ref{sec:model}. 
In Section \ref{sec:estim}, we present the parameter estimation procedure and establish the main properties of the estimators and provide the proofs in Section \ref{sec:proof}.
Section \ref{sec:num} is dedicated to the investigation of the numerical properties of the estimators, both on synthetic and real data.
Finally, Section \ref{sec:ccl} gathers some concluding remarks and future work directions.
%
\section{Model and observation framework}
\label{sec:model}
%
In this section, we first describe the stochastic model for counts of infected individuals. Then we define our observation framework and state our parameter estimation problem. Last, we explain our strategy to obtain estimators.
%
\subsection{Linear birth and death process with immigration}
\label{sec:LBDI}
%
For a water borne disease like typhoid fever, new infections are caused by direct or indirect contact with an infected individual or with the environment (contaminated food or water).
We make the following simplified assumptions.
Each individual contaminates a new one with a constant infection (birth) rate $\lambda$, independently from other individuals.
Infected individuals are isolated independently from one another at a constant isolation (death) rate $\mu$.
Exogenous new cases arrive at a constant (immigration) rate $\nu$ also independently from other contaminations and isolations.

Let $X_{t}$ be the number of infected individuals at time $t \geq 0$. Then the counting process $X= (X_{t})_{t\geq 0}$ is a linear birth and death process with immigration. It belongs to the class of Markov jump processes and its behavior is well understood, see  for instance \cite{feller1971introduction}. We recall the main properties here.
The distribution of $X$ is characterized by its infinitesimal generator $Q$ as follows
\begin{align*}
\mathbb{P}(X_{t+h}&=j |X_{t}=i) = Q(i,j)h + o(h)\quad\text{if } j\neq i,\\
\mathbb{P}(X_{t+h}&=i |X_{t}=i) = 1 + Q(i,i)h + o(h),
\end{align*}  
uniformly in $t$. 
The matrix $Q$ has non-zero entries given, for $i\geq 0$, by 
\begin{align*}
Q(i,i+1)&= \lambda i + \nu\\ 
Q(i,i-1) &= \mu i  \\ 
Q(i,i)&= - (\lambda + \mu) i - \nu.
\end{align*}  
The transition semi-group defined by $\big(P(t)=(p_{i,j}(t), (i,j)\in\mathbb{N}^2)\big)_{t\geq 0}$, where 
\begin{align*}
p_{i,j}(t)=\mathbb{P}(X_{t}=j |X_{0}=i), \; i, j \in \mathbb{N}, t\geq 0,
\end{align*}  
satisfies the (forward) Kolmogorov equation 
\begin{align}\label{eq1}
\frac{dP(t)}{dt}=P(t)Q,
\end{align}  
for $t>0$ with initial condition $P(0)$ equal to the Identity matrix on $\mathbb{N}$. It can be rewritten, for $i,j\in\mathbb{N}$ with $j\geq 1$, as
\begin{align*}
\frac{\mathrm{d} p_{i, 0}(t)}{\mathrm{d} t} &= \mu p_{i, 1}(t)- \nu  p_{i, 0}(t),\\
\frac{\mathrm{d} p_{i, j}(t)}{\mathrm{d} t} &=\left(\lambda (j-1) + \nu \right) p_{i, j-1}(t)+ \mu (j+1) p_{i, j+1}(t)-\left(( \lambda + \mu )j +\nu\right) p_{i, j}(t),
\end{align*}  
with initial condition $p_{i,i}(0) = 1$ and $ p_{i,j}=0$ for $j \neq i$. 
%
Finally, recall that a LBDI process (with $\nu>0$) has three possible regimes of long-time behavior: exponential growth (transience) if $\lambda>\mu$, all states are visited infinitely often with an infinite average return time (null recurrence) if $\lambda=\mu$, and all states are visited infinitely often with a finite average return time (positive recurrence) if $\lambda<\mu$. In the latter case, there exists a unique invariant distribution $\pi=(\pi_i, i\in\mathbb{N})$ given by 
\begin{align}\label{eq6}
\pi_{i}=
{r+i-1\choose i}
\left(\frac{\lambda}{\mu}\right)^{i}\left(1-\frac{\lambda}{\mu}\right)^r, 
\end{align}  
with $r=\nu/\lambda$.
In all the sequel, we assume that the LBDI process is in positive recurrent regime, i.e. $\nu>0$ and $\lambda<\mu$. Our aim is to estimate the triple $(\lambda, \mu,\nu)$.
\subsection{Cumulated new isolated counts}
\label{sec:cumul}
%
The main difficulty of this paper comes from the observation framework. We consider that $X_t$ is not observed. Only cumulated new isolated counts over given time lapses are available. Although this formulation is very common in public health data, it is mathematically original and challenging.

Let $Y_{(a,b]}$ denote the cumulated new isolated corresponding to process $X$ on the time interval $(a,b]$. Thus, $Y_{(a,b]}$ corresponds to the number of jumps with amplitude $-1$ of $X$ on the time interval $(a,b]$: $Y_{(a,b]}$ is the cardinal of the set $\{t\in(a,b]; X_t-X_{t^-}=-1\}$, where $X_{t^-}=\lim_{s<t, s\to t}X_s$.

The joint process $(X_t, Y_{(0,t]})$ is still a Markov jump process and its distribution is characterized by the transition probabilities
\begin{align*}
p_{i,(j, y)}(t) := \mathbb{P}\left(X_{t}=j,Y_{(0, t]}= y\mid X_{0}=i\right),
\end{align*}  
for $i,j,y\in\mathbb{N}$. They also satisfy a (backward) Kolmogorov equation, see Section \ref{app:KolmoY} for the proof.
%
\begin{lemma} \label{lem:KolmoY}
For $t>0$ and $i,j,y\in\mathbb{N}$, one has 
\begin{align}\label{eq:KolmoY}
\frac{\mathrm{d} p_{i,(j, y)}(t)}{\mathrm{d}t} =-((\lambda +\mu )i+\nu) p_{i,(j, y)}(t)+(\lambda i+\nu) p_{i+1,(j, y)}(t)+(\mu i) p_{i-1,(j, y-1)}(t),
\end{align}  
for $j \leq i+y$, with $p_{i,(j, y)}(t) = 0 $ whenever $j > i+y$ and with initial conditions  $p_{i,(j, y)}(0) = \delta_{i=j} \delta_{y=0}$.
\end{lemma}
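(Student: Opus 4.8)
The plan is to derive \eqref{eq:KolmoY} as a \emph{backward} equation, that is, by first-step (infinitesimal) analysis on the initial window $(0,h]$. I would rely on the fact, already asserted in the text, that $(X_t,Y_{(0,t]})$ is a time-homogeneous Markov jump process; the key structural observation is that, once the initial first coordinate $i$ is fixed, the exit rate from the state is the \emph{finite} quantity $(\lambda+\mu)i+\nu$, and non-explosion in the positive recurrent regime makes $p_{i,(j,y)}(t)$ well defined for all $t\ge 0$. I would then differentiate in $t$ by conditioning on the behaviour of the process on a short initial interval $(0,h]$ and invoking the Markov property and time homogeneity on the remaining interval $(h,t+h]$.

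Concretely, starting from $(X_0,Y_{(0,0]})=(i,0)$ there are exactly three leading-order outcomes on $(0,h]$: no jump, with probability $1-((\lambda+\mu)i+\nu)h+o(h)$, leaving the state at $(i,0)$; a single up-jump (birth or immigration), with probability $(\lambda i+\nu)h+o(h)$, moving the state to $(i+1,0)$; and a single down-jump (isolation), with probability $\mu i\,h+o(h)$, moving the state to $(i-1,1)$, since an isolation simultaneously increments $Y$. All events with two or more jumps contribute $o(h)$. Conditioning on these outcomes and using the Markov property yields
\begin{align*}
p_{i,(j,y)}(t+h) &= \bigl(1-((\lambda+\mu)i+\nu)h\bigr)\,p_{i,(j,y)}(t) + (\lambda i+\nu)h\,p_{i+1,(j,y)}(t)\\
&\quad + \mu i\,h\,p_{i-1,(j,y-1)}(t) + o(h),
\end{align*}
where the shift to $(j,y-1)$ in the isolation term records that one of the $y$ isolations has already been used up. Forming the difference quotient and letting $h\to0$ gives \eqref{eq:KolmoY}. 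The initial condition $p_{i,(j,y)}(0)=\delta_{i=j}\delta_{y=0}$ is immediate because $Y_{(0,0]}=0$ and $X_0=i$ almost surely, and the boundary at $i=0$ needs no special handling since the rate $\mu i$ vanishes there and automatically removes the $p_{-1,\cdot}$ term.

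For the support statement I would use a conservation argument: since the only jumps of $X$ have amplitude $\pm1$, the net change $j-i$ over $(0,t]$ equals the number of up-jumps minus the number of down-jumps, and the down-jumps are exactly the $y$ isolations recorded by $Y_{(0,t]}$. Hence reaching $j$ from $i$ with exactly $y$ isolations forces the number of up-jumps to equal $j-i+y$, which must be a nonnegative integer; $p_{i,(j,y)}(t)$ therefore vanishes outside the reachable region cut out by this nonnegativity constraint. One checks directly that the right-hand side of \eqref{eq:KolmoY} is consistent with this vanishing, because both shifted terms $p_{i+1,(j,y)}$ and $p_{i-1,(j,y-1)}$ also fall outside the reachable region there, so the ODE need only be verified on the reachable set.

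The only genuinely delicate point is the $o(h)$ control of the multiple-jump events over the infinite state space. The plan there is to note that after the fixed initial state $i$ the holding time is exponential with finite parameter $(\lambda+\mu)i+\nu$ and that a single jump can only move the first coordinate to $i\pm1$, so the probability of a second jump inside $(0,h]$ is bounded by a constant depending on $i$ times $h$, hence $o(h)$; non-explosion guarantees these expansions are valid uniformly in $t$ on compact intervals, after which the passage $h\to0$ is routine.
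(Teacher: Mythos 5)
Your proposal is correct and follows essentially the same route as the paper: a backward first-step decomposition over the initial window $(0,h]$ into the three leading-order events (no jump, up-jump to $(i+1,0)$, down-jump to $(i-1,1)$), followed by the Markov property, time homogeneity, and the passage $h\to 0$. Your additional remarks on the support constraint $j\le i+y$, the boundary case $i=0$, and the $o(h)$ control of multiple jumps only make explicit details the paper leaves implicit.
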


Let $\Delta t$ be a fixed time lapse, typically one day or one week. Our aim is to estimate the triple $(\lambda, \mu,\nu)$ from observations given by the sequence $(Y_{(n\Delta t, (n+1)\Delta t]})_{n\in\mathbb{N}}$.
%
\subsection{Estimation strategy}
\label{sec:approach}
%
Our estimation problem falls into the class of hidden information problems, as one given sequence of observations $(Y_{(n\Delta t, (n+1)\Delta t]})_{n\in\mathbb{N}}$ may correspond to several different possible trajectories of $X$. However, our aim is not to reconstruct the most likely trajectory, but instead to estimate the triple $(\lambda, \mu,\nu)$.

Unfortunately, were not able to obtain an analytical form of the transitions probabilities $p_{i,(j, y)}(t)$ from Equation (\ref{eq:KolmoY}). Therefore we cannot access the likelihood of the observations, and there is no hope for a direct maximization with respect to the unknown parameters. Instead, we proceed in two steps.

The first step is to study the discrete time Markov chain $(X_{n\Delta t})_{n\in\mathbb{N}}$. We obtain an analytical closed form for its transition probability matrix in terms of the unknown parameters $(\lambda, \mu,\nu)$. This formula can be inverted to obtain an expression of the parameters $(\lambda, \mu,\nu)$ in terms of some well chosen transition probabilities. We then obtain estimators by plug-in from any estimator of the said transition probabilities, and we establish that consistence and asymptotic normality of the transition probability estimators transfer to the estimators of $(\lambda, \mu,\nu)$. 

The second step takes into account the missing information framework by interpreting the discrete time Markov chain $(X_{n\Delta t},Y_{(n\Delta t, (n+1)\Delta t]} )_{n\in\mathbb{N}}$ as a hidden Markov model, where $(X_{n\Delta t})$ is the hidden chain and $(Y_{(n\Delta t, (n+1)\Delta t]})$ the observations. However, in this framework the emission process is not standard as the cumulated number of new isolated $Y_{(n\Delta t, (n+1)\Delta t]}$ depends on the whole continuous time trajectory of $X$ on the interval $(n\Delta t, (n+1)\Delta t]$ and not only on its final value $X_{(n+1)\Delta t}$. Using the Markov property, one can obtain the emission probabilities of $Y_{(n\Delta t, (n+1)\Delta t]}$ conditionally to $X_{n\Delta t}$ and $X_{(n+1)\Delta t}$ only. Our first idea was to run the standard HMM procedure from the R package HMM on the two-dimensional chain $Z_n=(X_{n-1},X_n)$ to estimate the transition probabilities of the hidden chain $Z$. Unfortunately, the method was numerically unstable and marred by errors due to the many zero entries in the transition matrix of $Z$. Instead, we completely rewrote the forward and backward probabilities in our special context of sparse structured transition matrix and adapted the Baum Welsh algorithm accordingly. This second step enables us to obtain estimations of the hidden transition probabilities of the Markov chain $(X_{n\Delta t})_{n\in\mathbb{N}}$ based on the observations $(Y_{(n\Delta t, (n+1)\Delta t]})$ only. We then use the explicit formulas from step one to obtain estimators of the triple $(\lambda, \mu,\nu)$.
%
\section{Parameters estimation}
\label{sec:estim}
%
In this section, we detail the construction of our estimators in a discrete time framework. 
First, we establish an analytical formula for the parameters of a LBDI process in terms of some discrete time transition probabilities. Then we adapt the Baum-Welch algorithm to our special form of emission distribution to estimate the transition probabilities of the hidden chain from the cumulated new isolated data. We finally obtain estimates of the original parameters by plug-in.
%
\subsection{Analytical expression of the transition probabilities}
%
The first step of our estimation procedure is to establish an analytical formula for parameters $(\lambda, \mu,\nu)$ as functions of the discrete time transition probabilities of the Markov chain $(X_{n\Delta t})_{n\in\mathbb{N}}$.
We first establish an analytical solution to the Kolmogorov equation (\ref{eq1}). 
%
\begin{theorem}\label{main0}
For $t \geq 0$, the transition probabilities of the continuous-time Markov chain $X_{t}$ are given, for $i,j\in\mathbb{N}$, by
\begin{align}\label{eq2} 
\lefteqn{p_{i,j} (t) }\\
&= q(t)^r\!\!\!\!\!\! \sum_{l=0}^{min(i,j)} \!\!\!\!
\binom{i}{l}\!
\binom{r+i+j-l-1}{j-l} \!\!
\left(\frac{\mu}{\lambda}\right)^{i-l} \!\!\!\!\!\!
\left(1-q(t) \right)^{i+j-2l}\!\!
\left(\!{1-(\frac{\mu}{\lambda}+1)(1-q(t))}\!\right)^l\!\!\!, \nonumber 
\end{align}
where $r= \frac{\nu}{\lambda}$ and $q(t)=\frac{\mu -\lambda}{ \mu - \lambda e^{(\lambda - \mu) t }}$.
\end{theorem}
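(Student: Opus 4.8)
The plan is to avoid attacking the Kolmogorov system componentwise and instead work with the probability generating function (PGF). Fixing the initial value $i$, I set
$$F(s,t) = \sum_{j=0}^{\infty} p_{i,j}(t)\,s^j = \mathbb{E}\!\left[s^{X_t}\mid X_0=i\right], \qquad s\in[0,1].$$
Multiplying the forward equations rewritten after (\ref{eq1}) by $s^j$, summing over $j$, and re-indexing the shifted sums, I expect every $j$-dependent coefficient to collapse into derivatives of $F$. Concretely, the birth term produces $\lambda s^2\partial_s F + \nu s F$, the death term $\mu\,\partial_s F$, and the diagonal term $-(\lambda+\mu)s\,\partial_s F - \nu F$, so that the coefficient of $\partial_s F$ factors as $(\lambda s-\mu)(s-1)$ and the coefficient of $F$ as $\nu(s-1)$. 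This yields the first-order linear PDE
$$\frac{\partial F}{\partial t} = (\lambda s-\mu)(s-1)\,\frac{\partial F}{\partial s} + \nu(s-1)\,F, \qquad F(s,0)=s^i.$$

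The heart of the argument is to solve this PDE by the method of characteristics. Along a curve with $\dot s = -(\lambda s-\mu)(s-1)$, partial fractions at the roots $s=1$ and $s=\mu/\lambda$ show that $K=\frac{\lambda s-\mu}{s-1}\,e^{(\mu-\lambda)t}$ is conserved; inverting gives the explicit characteristic, and along it the transport equation $\dot F = \nu(s-1)F$ integrates to $F$ up to a free function $g(K)$ of the characteristic label. Imposing $F(s,0)=s^i$ determines $g$, after which I would eliminate $K$ in favour of $(s,t)$.

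The main obstacle is the algebraic simplification: the raw output of the characteristics computation is a ratio involving $K$ and $e^{(\lambda-\mu)t}$ that must be reorganised into the clean closed form
$$F(s,t) = q(t)^r\,\frac{\big(c\,s+d\big)^i}{\big(1-(1-q(t))\,s\big)^{\,i+r}},\quad c=1-\Big(\tfrac{\mu}{\lambda}+1\Big)(1-q(t)),\quad d=\tfrac{\mu}{\lambda}(1-q(t)),$$
where $q(t)$ is exactly the quantity in the statement. The key simplifications are the identities $\frac{\lambda(E-1)}{\mu E-\lambda}=1-q(t)$ and $\frac{\mu-\lambda E}{\mu E-\lambda}=q(t)-\tfrac{\mu}{\lambda}(1-q(t))$ with $E=e^{(\mu-\lambda)t}$, which are what convert the characteristic expression into $q(t)$. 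The checks $q(0)=1$ (giving $F(s,0)=s^i$) and $F(1,t)=1$ serve as useful sanity tests along the way.

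Finally, once $F$ is in this rational form, I would read off $p_{i,j}(t)$ as the coefficient of $s^j$. Expanding $(cs+d)^i$ by the binomial theorem and $\big(1-(1-q(t))s\big)^{-(i+r)}$ by the generalized (negative-binomial) series $\sum_m\binom{i+r+m-1}{m}(1-q(t))^m s^m$, and taking the Cauchy product, the coefficient of $s^j$ is a single sum over $l$, the power of $s$ drawn from the first factor, with $l$ ranging from $0$ to $\min(i,j)$. Substituting the values of $c$ and $d$ and collecting the powers of $(1-q(t))$ and $\mu/\lambda$ then yields precisely (\ref{eq2}); this last step is routine bookkeeping once the closed form for $F$ is established.
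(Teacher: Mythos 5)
Your proposal is correct, and its core is the same as the paper's: pass to the probability generating function, derive the first-order linear PDE (your coefficient $(\lambda s-\mu)(s-1)$ is the paper's $(\mu-\lambda s)(1-s)$), and solve it by the method of characteristics; your conserved quantity $K=\frac{\lambda s-\mu}{s-1}\,e^{(\mu-\lambda)t}$ is precisely the characteristic label integrated in the paper's Lemma~\ref{lemma1}, and your closed form $q(t)^{r}(cs+d)^{i}\bigl(1-(1-q(t))s\bigr)^{-(i+r)}$ coincides with the paper's expression for $G_i(s,t)$. The one place you genuinely diverge is the final coefficient extraction: the paper factorizes $G_i=G_0\cdot N_i$ as in Equation~(\ref{eq81}), reads off the negative-binomial coefficients of $G_0$ and the known linear birth--death coefficients of $N_i$, and then convolves the two sequences, which leaves a double sum that must be collapsed by the Vandermonde-type identity $\sum_{k=0}^{j-l}\binom{r-1+k}{k}\binom{i-1+j-l-k}{j-l-k}=\binom{r+i+j-l-1}{j-l}$ to reach Equation~(\ref{eq2}); you instead expand the single rational form directly --- binomial theorem on $(cs+d)^i$ against one generalized negative-binomial series of exponent $i+r$ --- so the Cauchy product lands on the single sum over $l$ immediately, with no convolution identity needed. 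Your endgame is slightly more economical; what the paper's factorization buys is the probabilistic interpretation (independence of environmental contamination, carried by $G_0$, and person-to-person spread, carried by $N_i$) that the authors emphasize after Equation~(\ref{eq81}).
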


The proof is given in Section \ref{app:th1} and relies on turning the Kolmogorov equation into a standard first-order linear partial differential equation for the probability generating function of $X_t$. To the authors' best knowledge, the result of Theorem \ref{main0} is new. 
Note that a similar methodology was used and discussed in \cite{kobayashi2021stochastic1} yielding also closed form expressions for transition probabilities and originally comes from \cite{ross2014introduction}.

It is well known that the probability transition matrix of chain $(X_{n\Delta t})_{n\in\mathbb{N}}$ is $P(\Delta t)$, hence we have an analytic form for the discrete time transition probabilities. We now want to inverse Equation (\ref{eq2}) in order to obtain an expression of the parameters of interest $(\lambda, \mu,\nu)$ as a function of the transition probabilities. As there are only $3$ unknown parameters, it should be sufficient to select $3$ specific transition probabilities. In accordance with the real data set described in Section \ref{sec:dataARS}, we choose to use $p_{0,0}(\Delta t)$, $p_{0,1}(\Delta t)$ and $p_{1,0}(\Delta t)$ as they correspond to the most frequently observed transitions (of new isolated counts) and have the simplest analytical expressions. To simplify notations, in the sequel we will denote $p_{0,0}$, $p_{0,1}$ and $p_{1,0}$ instead of $p_{0,0}(\Delta t)$, $p_{0,1}(\Delta t)$ and $p_{1,0}(\Delta t)$, and mote generally $p_{i,j}$ instead of $p_{i,j}(\Delta t)$.
%
\begin{theorem}
\label{main1}
The parameters $\lambda$, $\mu$ and $\nu$ are given by the following relations 
\begin{equation}\label{eq:defg}
(\lambda,\mu,\nu)=g(p_{0,0}, p_{0,1},p_{1,0})
\end{equation}
 with $g=(g_1,g_2,g_3)$ and 
\begin{align*}
{\lambda} &=g_1(p_{0,0}, p_{0,1},p_{1,0})=\frac{\ln\left(\frac{u}{q}\right) ({q} - 1)}{\Delta t \left( {q} - {u} \right)},\\
{\mu} &=g_2(p_{0,0}, p_{0,1},p_{1,0})= \frac{\ln\left(\frac{{u}}{{q}}\right)( {u}- 1)}{\Delta t \left(  q- {u} \right)},\\
{\nu} &=g_3(p_{0,0}, p_{0,1},p_{1,0})= \frac{\ln(p_{0,0})}{\ln(q)}\frac{\ln\left(\frac{{u}}{{q}}\right)({q} - 1)}{\Delta t \left( {q} - {u} \right)},
\end{align*}
where 
\begin{align*}
q & =  \frac{p_{0,1}}{p_{0,0} \ln(p_{0,0})} W\left( \frac{p_{0,0}^{\left(\frac{p_{0,0}}{p_{0,1}} +1 \right)} \ln(p_{0,0})}{p_{0,1}} \right),\\
u & = 1-\frac{p_{1,0}}{p_{0,0}},
\end{align*}
and $W$ is the Lambert  function, see e.g. \cite{corless1993dj,corless1993lambert}.
\end{theorem}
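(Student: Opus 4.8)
The plan is to invert the closed form \eqref{eq2} of Theorem~\ref{main0} evaluated at the three index pairs $(0,0)$, $(0,1)$ and $(1,0)$. Writing $q=q(\Delta t)$ and $r=\nu/\lambda$, and observing that in each of these cases $\min(i,j)=0$ so that only the $l=0$ term of the sum contributes, direct substitution and evaluation of the binomial coefficients reduce \eqref{eq2} to the system
\begin{align*}
p_{0,0}&=q^{r},\\
p_{0,1}&=r(1-q)\,p_{0,0},\\
p_{1,0}&=\frac{\mu}{\lambda}(1-q)\,p_{0,0}.
\end{align*}
This first step is entirely routine.

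Next I would recover $(\lambda,\mu)$ as functions of $q$ and of $u:=1-p_{1,0}/p_{0,0}=1-\frac{\mu}{\lambda}(1-q)$. Introducing the auxiliary quantity $\rho=e^{(\lambda-\mu)\Delta t}$, for which the definition of $q$ reads $q=(\mu-\lambda)/(\mu-\lambda\rho)$, a short algebraic manipulation yields the two identities
\begin{align*}
u=\rho\,q,\qquad \frac{\mu}{\lambda}=\frac{u-1}{q-1}.
\end{align*}
The first gives $\rho=u/q$, hence $\lambda-\mu=\frac{1}{\Delta t}\ln(u/q)$; together with the second, this is a linear system in $(\lambda,\mu)$ whose solution is exactly $g_1$ and $g_2$. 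Finally, $p_{0,0}=q^{r}$ gives $r=\ln p_{0,0}/\ln q$, and since $\nu=r\lambda$ we obtain $g_3$.

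It remains to express $q$ itself through the observed probabilities. Combining $p_{0,1}=r(1-q)p_{0,0}$ with $r=\ln p_{0,0}/\ln q$ and dividing by $\ln p_{0,0}$ gives the transcendental equation $c\ln q=1-q$, with $c=\frac{p_{0,1}}{p_{0,0}\ln p_{0,0}}$. Writing $q=1-c\ln q$ and exponentiating leads to $q\,e^{q/c}=e^{1/c}$; multiplying by $1/c$ and setting $w=q/c$ turns this into $w\,e^{w}=e^{1/c}/c$, so that $w=W(e^{1/c}/c)$ and $q=c\,W(e^{1/c}/c)$. Using $e^{1/c}=p_{0,0}^{\,p_{0,0}/p_{0,1}}$, the argument and the prefactor simplify to precisely the stated expression for $q$.

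The main obstacle is the branch selection in this last step. The equation $c\ln q=1-q$ always admits the spurious root $q=1$ in addition to the one we seek, and both correspond to the same argument of $W$ but to its two real branches. I would resolve this by checking that in the positive recurrent regime ($0<\lambda<\mu$, $\nu>0$) one has $q\in(0,1)$ and $p_{0,0}\in(0,1)$, whence $c=\frac{1-q}{\ln q}\in(-1,0)$ and the argument $e^{1/c}/c$ lies in $(-1/e,0)$; a monotonicity argument (e.g.\ that $q\ln q-q+1>0$ on $(0,1)$) then shows that the desired root corresponds to $w=q/c\in(-1,0)$, i.e.\ to the principal branch $W=W_0$, whereas the other branch would return the trivial value $q=1$. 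Establishing this branch identification together with uniqueness of the root in $(0,1)$, rather than the elementary algebra, is where the real care is required.
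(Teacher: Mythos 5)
Your proposal follows essentially the same route as the paper's own proof: evaluate the closed form of Theorem~\ref{main0} at the transitions $(0,0)$, $(0,1)$, $(1,0)$ to get $p_{0,0}=q^r$, $p_{0,1}=q^r r(1-q)$, $p_{1,0}=q^r(\mu/\lambda)(1-q)$, solve the transcendental equation for $q$ via the Lambert $W$ function, introduce $u=1-p_{1,0}/p_{0,0}=q e^{(\lambda-\mu)\Delta t}$, and solve the resulting system for $(\lambda,\mu,\nu)$. The one genuine addition is your discussion of branch selection --- showing that in the positive recurrent regime the argument of $W$ lies in $(-1/e,0)$, that the desired root satisfies $q/c\in(-1,0)$ and hence corresponds to the principal branch $W_0$, while the other real branch returns the spurious root $q=1$ --- a point the paper's proof leaves entirely implicit, so this is a worthwhile refinement of rigor rather than a different method.
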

%
The proof is given in Section \ref{app:th2} and relies on inversion of the formulas in Theorem \ref{main0}.
In particular, if one plugs estimators of $p_{0,0}$, $p_{0,1}$ and $p_{1,0}$ in the formulas of Theorem \ref{main1}, one obtains estimators of the unknown parameters $(\lambda, \mu,\nu)$.
%
\begin{cor}
\label{main2}
Let $(\hat p_{0,0}^n,\hat p_{0,1}^n,\hat p_{1,0}^n)$ be a consistent, asymptotically unbiased  and normal estimator of $(p_{0,0},p_{0,1},p_{1,0})$ in that 
\begin{align*}
\sqrt{n}\left(\begin{array}{c}
\hat p_{0,0}^n-p_{0,0}\\
\hat p_{0,1}^n-p_{0,1}\\
\hat p_{1,0}^n-p_{1,0}
\end{array}\right)
\xrightarrow[n\to\infty]{\mathcal{L}}\mathcal{N}_3(0,\Sigma'),
\end{align*}
for some covariance matrix $\Sigma'$. 
Then the estimators $(\hat \lambda^n,\hat \mu^n,\hat \nu^n)=g(\hat p_{0,0}^n,\hat p_{0,1}^n,\hat p_{1,0}^n)$ obtained by plug-in from Eq. (\ref{eq:defg}) are also consistent and asymptotically normal with asymptotic covariance matrix given by $\Sigma = Dg \Sigma' Dg^T$, where $Dg$ is the Jacobian matrix of mapping $g$ and $Dg^T$ is its transpose.
\end{cor}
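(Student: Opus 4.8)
The plan is to recognize Corollary \ref{main2} as a direct consequence of two classical results: the continuous mapping theorem for the consistency part, and the multivariate delta method for the asymptotic normality. The only genuine work lies in checking that the map $g$ of Theorem \ref{main1} is continuously differentiable at the true point $p=(p_{0,0},p_{0,1},p_{1,0})$.

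First I would establish consistency. The assumed convergence $\sqrt{n}(\hat p^n-p)\xrightarrow[n\to\infty]{\mathcal L}\mathcal N_3(0,\Sigma')$ implies that $\sqrt{n}(\hat p^n-p)$ is tight, hence $\hat p^n-p=o_{\mathbb P}(1)$, so $(\hat p_{0,0}^n,\hat p_{0,1}^n,\hat p_{1,0}^n)\to(p_{0,0},p_{0,1},p_{1,0})$ in probability. Provided $g$ is continuous at $p$, the continuous mapping theorem yields $g(\hat p^n)\to g(p)=(\lambda,\mu,\nu)$ in probability, which is the claimed consistency of $(\hat\lambda^n,\hat\mu^n,\hat\nu^n)$. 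Asymptotic normality then follows from the delta method: if $g$ is differentiable at $p$ with Jacobian $Dg$, the method transforms the central limit theorem for $\hat p^n$ into $\sqrt{n}(g(\hat p^n)-g(p))\xrightarrow[n\to\infty]{\mathcal L}\mathcal N_3(0,Dg\,\Sigma'\,Dg^T)$, which is exactly the asserted limit with $\Sigma=Dg\,\Sigma'\,Dg^T$.

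The main obstacle, and the only point requiring care, is verifying that $g=(g_1,g_2,g_3)$ is $C^1$ in a neighborhood of $p$. The map $g$ is built from rational operations, the logarithm, and the Lambert function $W$, each smooth on its natural domain; it therefore suffices to check that $p$ lies in the interior of that domain. Concretely I would verify, under the standing assumptions $\nu>0$ and $\lambda<\mu$ (positive recurrent regime), that the quantities entering the logarithms are strictly positive, in particular $p_{0,0}\in(0,1)$ so that $\ln(p_{0,0})\neq 0$, and $u/q>0$; that the denominators do not vanish, i.e. $q\neq u$ and $q\neq 1$; and that the argument of $W$ stays away from the branch point $-1/e$, where $W$ fails to be differentiable (recall $W'(z)=W(z)/(z(1+W(z)))$ for $z\neq 0,-1/e$).

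These conditions are readily checked from the explicit expressions of Theorem \ref{main0}. Since $\lambda<\mu$ one has $q=q(\Delta t)\in(0,1)$, so $q\neq 1$; starting from $0$ with $\nu>0$ gives $p_{0,0}\in(0,1)$ and $p_{0,0}>p_{1,0}>0$, whence $u\in(0,1)$ and $u/q>0$; the remaining nondegeneracy $q\neq u$ and the admissible location of the $W$-argument hold throughout the regime since $q$ and $u$ are smooth functions of $(\lambda,\mu,\nu)$ taking values in the ranges dictated by $\nu>0$, $\lambda<\mu$. Consequently $g$ is smooth at $p$, both the continuous mapping theorem and the delta method apply, and the proof is complete.
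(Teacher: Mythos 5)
Your proof is correct and follows essentially the same route as the paper, which simply invokes the delta method after noting that $g$ is continuously differentiable away from $0$ (with the Jacobian $Dg$ computed in the appendix). Your additional verification of the smoothness of $g$ — positivity of $p_{0,0}$, $q\neq u$ (which indeed holds since $u=qe^{(\lambda-\mu)\Delta t}<q$ under $\lambda<\mu$), and the Lambert-$W$ argument staying off the branch point — fills in details the paper leaves implicit, but it is the same argument.
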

%
The proof is straightforward as $g$ is continuously differentiable away from $0$. The computation of the Jacobian matrix of $Dg$ is detailed in Section \ref{app:D}.

For instance, let $\hat{p}^n_{i,j}$ be the maximum likelihood estimator of ${p}_{i,j}$ for observations $(X_{k\Delta t}, 0\leq k\leq n)$ and for $(i,j)\in\{(0,0), (0,1), (1,0)\}$. These estimators correspond to the ratio of the number of observed transitions from $i$ to $j$ over the total number of observed transitions from $i$ in the sequence $(X_{k\Delta t}, 0\leq k\leq n)$, see Section \ref{sec:num-simu-X}. Under the positive recurrence assumption ($\lambda<\mu$ and $\nu>0$), they are consistent and asymptotically unbiased and normal with limiting covariance matrix given by
\begin{align}\label{eq=sigma'}
\Sigma' =  \begin{pmatrix}
 \dfrac{p_{0,0} \left(1-p_{0,0}\right) }{\pi_0} & -\dfrac{p_{0,0}p_{0,1}}{\pi_0} & 0 \\
  -\dfrac{p_{0,0}p_{0,1}}{\pi_0} &\dfrac{p_{0,1} \left(1-p_{0,1}\right) }{\pi_0}  & 0 \\
   0 & 0 & \dfrac{p_{1,0} \left(1-p_{1,0}\right) }{\pi_1} \\
 \end{pmatrix},
\end{align}
where $\pi$ is the invariant distribution given in Equation (\ref{eq6}), 
see e.g.  \cite[Theorem 2.1]{teodorescu2009maximum}. Hence they satisfy the assumptions of Corollary \ref{main2}. However, they cannot be used in our context as the $(X_{k\Delta t}, 0\leq k\leq n)$ are hidden. Therefore we turn to another estimation procedure for $p_{0,0}$, $p_{0,1}$ and $p_{1,0}$ and use the framework of hidden Markov models.
%
\subsection{Hidden information framework}
\label{sec:HMM}
%
We now turn to the estimation of the transition probabilities $p_{0,0}$, $p_{0,1}$ and $p_{1,0}$ of the hidden chain $(X_{n\Delta t})_{n\in\mathbb{N}}$ from the available observations $(Y_{(n\Delta t, (n+1)\Delta t]})_{n\in\mathbb{N}}$.  To simplify notations, in this section we note $X_n$ instead of $X_{n\Delta t}$ and $Y_n$ instead of $Y_{((n-1)\Delta t, n\Delta t]}$.

Our aim in this section is to adapt the standard HMM algorithms to obtain estimations of the transition probabilities of the hidden chain $(X_n)$ from the observations $(Y_n)$. However, as explained in the previous section, $Y_{n}$ depends on both $X_{n-1}$ and $X_n$, therefore the emission scheme is non standard. Yet, note that the past $(Y_k)_{k\leq n-1}$ and current observations $Y_{n}$ are independent conditional on $X_{n-1}$. To circumvent the difficulty arising from the non standard emission scheme, we use the two-dimensional Markov chain corresponding to pairs of consecutive hidden states.

Denote by $(Z_n)_{n \in \mathbb{N}^*} =(X_{n-1},X_{n})_{n \in \mathbb{N}^*}$ the  two-dimensional hidden Markov chain with values in the state space $\mathbb{N}^2$. Then it is straightforward to prove that $(Z_n,Y_n)$ is a standard HMM with the following characteristics.
%
\begin{lemma}
\label{lem:HMMdef}
The process $(Z_n,Y_n)_{n\in\mathbb{N}^*}$ is a hidden Markov model with characteristics given by the triple $M = (Q,\psi,\rho)$, where\\
$\bullet$ the transition probability matrix $Q$ of the hidden chain $(Z_n)$ is 
\begin{align*}
Q_{(i,j),(i',j')}= \mathbb{P}(Z_{n+1}=(i',j')|Z_n=(i,j))=p_{i',j'}\delta_{i'=j},
\end{align*}
for  $(i,j), (i',j') \in \mathbb{N}^2$;\\
$\bullet$ the emission probability $\psi$ of process $Y$ given process $Z$ is
\begin{align*}
\psi_{(i,j)}(y)= \mathbb{P}(Y_n = y | Z_n=(i,j)) =  \frac{p_{i,(j, y)}}{p_{i,j}},
\end{align*}
for $i,j,y\in\mathbb{N}$, where $p_{i,(j, y)}=p_{i,(j, y)}(\Delta t)$ is characterized by Equation (\ref{eq:KolmoY});\\
$\bullet$ the initial distribution $\rho$ of the state process $Z$ is
\begin{align*}
\rho_{i,j}=\mathbb{P}(Z_1=(i,j))=\pi_i p_{i,j} ,
\end{align*}
for $i,j\in\mathbb{N}$, where $ \pi$ is the stationary distribution of $X$ given in Equation (\ref{eq6}).
\end{lemma}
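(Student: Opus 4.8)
The plan is to verify each of the three characteristics of $M=(Q,\psi,\rho)$ directly, and then check that the pair $(Z_n,Y_n)$ satisfies the definition of a standard HMM, namely that $(Z_n)$ is a Markov chain and that, conditionally on the full hidden trajectory $(Z_k)$, each observation $Y_n$ depends only on $Z_n$ and the $Y_n$ are independent. The key input throughout is that the joint continuous-time process $(X_t, Y_{(0,t]})$ is a Markov jump process, from which the discrete-time Markov property of $(X_{n\Delta t}, Y_{(n-1)\Delta t, n\Delta t]})$ follows by sampling at the grid $n\Delta t$.

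First I would establish that $(Z_n)=(X_{n-1},X_n)$ is a Markov chain and compute its kernel. Since $(X_n)$ is itself Markov (it is the LBDI sampled at step $\Delta t$, with transition matrix $P(\Delta t)$), the consecutive-pairs chain inherits the Markov property: given $Z_n=(X_{n-1},X_n)=(i,j)$, the next pair $Z_{n+1}=(X_n,X_{n+1})$ has first coordinate forced to equal $j$ and second coordinate distributed as $X_{n+1}$ given $X_n=j$. This yields $Q_{(i,j),(i',j')}=\mathbb{P}(X_{n+1}=j'\mid X_n=j)\,\delta_{i'=j}=p_{j,j'}\,\delta_{i'=j}$, which matches the claimed formula once one renames $j'$ in the transition probability; the Kronecker factor $\delta_{i'=j}$ encodes the deterministic overlap of consecutive pairs and is the source of the sparse structure mentioned earlier in the paper.

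Next I would derive the emission law $\psi$. By definition $\psi_{(i,j)}(y)=\mathbb{P}(Y_n=y\mid Z_n=(i,j))=\mathbb{P}(Y_n=y\mid X_{n-1}=i,X_n=j)$. Applying the definition of conditional probability and time-homogeneity,
\begin{align*}
\psi_{(i,j)}(y)=\frac{\mathbb{P}(X_n=j,\,Y_n=y\mid X_{n-1}=i)}{\mathbb{P}(X_n=j\mid X_{n-1}=i)}=\frac{p_{i,(j,y)}(\Delta t)}{p_{i,j}(\Delta t)},
\end{align*}
where the numerator is exactly the joint transition probability characterized by Lemma \ref{lem:KolmoY}. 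For the initial law, stationarity of $X$ gives $\mathbb{P}(X_0=i)=\pi_i$, so $\rho_{i,j}=\mathbb{P}(X_0=i,X_1=j)=\pi_i\,p_{i,j}$, as claimed. The stated formulas then follow immediately.

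The genuine content, and the step I expect to be the main obstacle, is verifying the conditional-independence structure that makes $(Z_n,Y_n)$ a \emph{standard} HMM rather than merely a process with a hidden component. Concretely, one must show that conditionally on the hidden chain $(Z_k)_k$, the observations $(Y_n)_n$ are independent with $Y_n$ depending only on $Z_n$. The subtlety is that each $Y_n$ is a functional of the entire continuous-time path of $X$ on $((n-1)\Delta t, n\Delta t]$, so these random variables are \emph{not} independent given only $(X_n)$; independence is recovered only after conditioning on the pairs $(Z_n)=(X_{n-1},X_n)$, because fixing both endpoints of each interval decouples the bridges on disjoint intervals by the strong Markov property of the jump process. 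I would argue this by writing the joint law of $(Y_1,\dots,Y_N)$ given $(X_0,\dots,X_N)$ as a product over intervals, invoking the Markov property of $(X_t,Y_{(0,t]})$ at the grid points to factorize, and identifying each factor as $\psi_{(X_{n-1},X_n)}(Y_n)$; the key observation flagged in the text, that the past $(Y_k)_{k\le n-1}$ and the current $Y_n$ are conditionally independent given $X_{n-1}$, is precisely what licenses this factorization. Once this factorization is in hand, the emission indeed depends only on the current two-dimensional state $Z_n$, and all the standard HMM machinery (and hence the adapted Baum--Welch algorithm) applies.
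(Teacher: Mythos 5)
Your proposal is correct, and it is essentially the verification the paper has in mind: the paper offers no written proof of this lemma (it is declared ``straightforward to prove''), and your three computations --- the kernel of the consecutive-pairs chain $Q_{(i,j),(i',j')}=p_{j,j'}\delta_{i'=j}=p_{i',j'}\delta_{i'=j}$, the emission ratio $\psi_{(i,j)}(y)=p_{i,(j,y)}/p_{i,j}$ obtained from the Markov property and time-homogeneity of the joint process $(X_t,Y_{(0,t]})$ of Lemma \ref{lem:KolmoY}, and the initial law $\rho_{i,j}=\pi_i p_{i,j}$ under stationarity --- together with the interval-wise factorization of the law of $(Y_1,\dots,Y_N)$ given the grid values, are exactly the straightforward argument suggested by the surrounding text.

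One remark in your final paragraph is inaccurate, although it does not invalidate the proof. You claim the $Y_n$ are ``not independent given only $(X_n)$'' and that ``independence is recovered only after conditioning on the pairs $(Z_n)$''. This cannot be right as stated: the sequences $(X_n)_{n\geq 0}$ and $(Z_n)_{n\geq 1}$ generate the same $\sigma$-algebra, so passing to pairs cannot create conditional independence that was absent. In fact, conditionally on all grid values $(X_0,\dots,X_N)$, the $Y_n$ \emph{are} already independent, since fixing the endpoints decouples the bridges of the jump process on disjoint intervals --- which is precisely the factorization your own argument establishes. What fails for the one-dimensional chain $(X_n)$, and what the pairs construction actually repairs, is the other half of the standard HMM requirement: the conditional law of $Y_n$ given the hidden trajectory is $\psi_{(X_{n-1},X_n)}(\cdot)$, a function of \emph{two} consecutive hidden values rather than of $X_n$ alone. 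Recasting the hidden state as $Z_n=(X_{n-1},X_n)$ turns this into a function of the current hidden state only, which, combined with the conditional independence you prove, is exactly the standard HMM structure needed for the adapted Baum--Welch algorithm.
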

%
The standard algorithm to estimate the most likely parameters $M = (Q,\psi,\rho)$ of the HMM given the observations is the forward-backward or Baum-Welch algorithm, which is a special case of the Expectation-Maximization (EM) algorithm, \cite{baum1966statistical}.
It is an iterative algorithm. At step $n$, given the current parameters $M^n = (Q^n,\psi^n,\rho^n)$, the likelihood of the observations $\mathbb{P}(Y_1=y_1,\ldots,Y_T=y_T | M^n)$ is maximized and the model $M^n$ is updated to $M^{n+1}$ accordingly. The main asset of the Baum-Welch algorithm is that the likelihood can be maximized explicitly. 

Unfortunately, running the standard HMM procedure from the R package HMM on $(Z_n,Y_n)$ yields numerical instability and errors due to the many zero entries in the transition matrix $Q$ of $Z$. Thus, we rewrote the main functions adapting the recursive formulas to the special sparse structure of $Q$ to obtain explicit estimators of the $p_{i,j}$.

\begin{theorem}
\label{th:HMMiter}
For the initial parameters $M^n = (Q^n,\psi^n,\rho^n)$, the maximum likelihood estimates of $M^{n+1} = (Q^{n+1},\psi^{n+1},\rho^{n+1})$ based on the observed tuple $(y_1,\ldots,y_T)$ is given by
\begin{align*}
Q_{(i,j),(i',j')}^{n+1} &= \dfrac{\sum_{t=1}^{T}\xi^n_{(i,j),(i',j')}(t)}{\sum_{t=1}^{T} \gamma^n_{(i,j)}(t) } \delta_{i'=j}, \\ 
\psi_{(i,j)}^{n+1}(y) &= \dfrac{\sum_{t=1}^{T} \mathbbm{1}_{y_t = y} \gamma^n_{(i,j)}(t)}{\sum_{t=1}^{T} \gamma^n_{(i,j)}(t)},\\
\rho_{i,j}^{n+1}&= \gamma^n_{(i,j)}(1),\\
\end{align*}
where
\begin{align*}
\gamma_{(i,j)}^n(t) &= \dfrac{\alpha_{(i,j)}^n(t) \beta_{(i,j)}^n(t)}{ \sum_{i,j\in\mathbb{N}} \alpha_{(i,j)}^n(t) \beta_{(i,j)}^n(t)},\\
\xi_{(i,j),(i',j')}^n(t+1)&= \dfrac{\alpha^n_{(i,j)}(t) p^n_{(i',j')}\psi^n_{(i',j')}(y_{t+1}) \beta^n_{(i',j')}(t+1) }{ \sum_{i,j\in\mathbb{N}} \alpha_{(i,j)}^n(t) \beta_{(i,j)}^n(t)}\delta_{i'=j},\\
p_{i,j}^{n}&= \dfrac{\sum_{i'\in\mathbb{N} }Q^{n}_{(i',i),(i,j)}}{\sum_{i'\in\mathbb{N}}\mathbbm{1}_{Q^{n}_{(i',i),(i,j)}\neq 0}},\\
\end{align*}
and the forward and backward sequences $\alpha^n$ and $\beta^n$ are defined recursively, for $2\leq t\leq T$, by
\begin{align*}
\begin{cases}
\alpha^n_{(i,j)}(1)= \rho^n_{(i,j)} \psi^n_{(i,j)}\left(y_{1}\right), \\
\alpha^n_{(i,j)}(t)=\psi^n_{(i,j)}\left(y_{t}\right) p^n_{i,j} \sum_{i'\in\mathbb{N}} \alpha^n_{(i',i)}(t-1),
\end{cases}\\
\begin{cases}
\beta^n_{(i,j)}(T)= 1, \\
\beta^n_{(i,j)}(t-1)=  \sum_{j'\in\mathbb{N}} p^n_{j,j'} \psi^n_{(j,j')}\left(y_{t}\right) \beta^n_{(j,j')}(t).
\end{cases}
\end{align*}
In addition, the estimated transition probabilities of $(X_n)$ are obtained by
\begin{align*}
p_{i,j}^{n+1}&= \dfrac{\sum_{i'\in\mathbb{N} }Q^{n+1}_{(i',i),(i,j)}}{\sum_{i'\in\mathbb{N}}\mathbbm{1}_{Q^{n+1}_{(i',i),(i,j)}\neq 0}}.
\end{align*}
\end{theorem}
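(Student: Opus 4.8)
The plan is to read the statement as the Expectation--Maximization (Baum--Welch) step for the hidden Markov model $(Z_n,Y_n)$ identified in Lemma~\ref{lem:HMMdef}, and to run the classical derivation while systematically exploiting the sparse structure $Q_{(i,j),(i',j')}=p_{i',j'}\delta_{i'=j}$. I would start from the complete-data likelihood of a path $(z_{1:T},y_{1:T})$, which factorizes along the chain as
\begin{align*}
\mathbb{P}(z_{1:T},y_{1:T}\mid M)=\rho_{z_1}\,\psi_{z_1}(y_1)\prod_{t=2}^{T}Q_{z_{t-1},z_t}\,\psi_{z_t}(y_t),
\end{align*}
and form the EM auxiliary function $\mathcal{Q}(M\mid M^n)=\mathbb{E}_{M^n}\!\left[\log\mathbb{P}(Z_{1:T},y_{1:T}\mid M)\mid y_{1:T}\right]$. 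Taking logarithms, $\mathcal{Q}$ splits additively into an initial term in $\rho$, a transition term in $Q$, and an emission term in $\psi$, so that the three parameter families decouple in the maximization.

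Next I would treat the E-step. Define the forward and backward quantities $\alpha^n_{(i,j)}(t)=\mathbb{P}_{M^n}(y_{1:t},Z_t=(i,j))$ and $\beta^n_{(i,j)}(t)=\mathbb{P}_{M^n}(y_{t+1:T}\mid Z_t=(i,j))$, and derive their recursions from the Markov property of $(Z_n,Y_n)$. This is the step where the sparsity enters: in the generic forward recursion $\alpha^n_{(i,j)}(t)=\psi^n_{(i,j)}(y_t)\sum_{(i'',j'')}\alpha^n_{(i'',j'')}(t-1)Q_{(i'',j''),(i,j)}$, the factor $\delta_{i=j''}$ collapses the double sum over the previous two-dimensional state into the single sum $\sum_{i''}\alpha^n_{(i'',i)}(t-1)$, with the common factor $p^n_{i,j}$ pulled out, matching the stated recursion; the same collapse yields the backward recursion. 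The smoothed marginals and pairwise posteriors $\gamma^n_{(i,j)}(t)=\mathbb{P}_{M^n}(Z_t=(i,j)\mid y_{1:T})$ and $\xi^n_{(i,j),(i',j')}(t+1)=\mathbb{P}_{M^n}(Z_t=(i,j),Z_{t+1}=(i',j')\mid y_{1:T})$ then take the stated $\alpha,\beta$ forms, the common denominator being the observation likelihood $\sum_{i,j}\alpha^n_{(i,j)}(t)\beta^n_{(i,j)}(t)=\mathbb{P}_{M^n}(y_{1:T})$.

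For the M-step I would maximize each additive block of $\mathcal{Q}$ under its normalization constraint by a Lagrange multiplier argument. Maximizing the emission block $\sum_t\sum_{(i,j)}\gamma^n_{(i,j)}(t)\log\psi_{(i,j)}(y_t)$ subject to $\sum_y\psi_{(i,j)}(y)=1$ gives the stated ratio of expected visit counts, and likewise $\rho^{n+1}_{i,j}=\gamma^n_{(i,j)}(1)$; the transition block yields $Q^{n+1}=\sum_t\xi^n/\sum_t\gamma^n$, with the factor $\delta_{i'=j}$ inherited from the fixed support of $Q$. The last point is the recovery of the one-dimensional probabilities: since each $p_{i,j}$ is shared across all rows $(i',i)$ of $Q$, I would recover $p^{n+1}_{i,j}$ from the updated matrix by the stated averaging over the free index $i'$ of the nonzero entries, consistent with the identity used to define $p^n_{i,j}$ from $Q^n$.

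The step I expect to be the main obstacle is this parameter tying. The entries $Q_{(i',i),(i,j)}$ for varying $i'$ are all constrained to equal the single parameter $p_{i,j}$, so the genuine constrained maximizer pools the sufficient statistics $\sum_{i'}\sum_t\xi^n_{(i',i),(i,j)}(t+1)$ before normalizing, whereas the per-row update followed by averaging is the pragmatic adaptation actually implemented. The care needed is to verify that this recovery preserves the structural form of $Q$ (nonnegativity, correct support, and the induced row sums of the $X$-transition matrix), and to keep the many vanishing terms out of the sparse sums; the remaining Lagrangian and forward--backward computations are routine.
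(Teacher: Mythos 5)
Your proposal is correct and follows essentially the same route as the paper's proof: forward and backward recursions collapsed by the sparsity factor $\delta_{i'=j}$, the posterior quantities $\gamma^n$ and $\xi^n$ expressed through $\alpha^n,\beta^n$ with the observation likelihood $\sum_{i,j}\alpha^n_{(i,j)}(t)\beta^n_{(i,j)}(t)$ as common denominator, and the Baum--Welch updates for $(Q,\psi,\rho)$. In fact your write-up is more complete than the paper's, which only records the E-step definitions and recursions and then asserts the M-step formulas without derivation; your EM auxiliary function with the Lagrangian maximization, and especially your remark that the per-row update of $Q^{n+1}$ followed by averaging over $i'$ is a pragmatic recovery of $p^{n+1}_{i,j}$ rather than the genuine constrained maximizer under the tying $Q_{(i',i),(i,j)}=p_{i,j}$ (which would pool $\sum_{i'}\sum_t\xi^n_{(i',i),(i,j)}(t)$ before normalizing), fill in points the paper leaves implicit.
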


Elements of proof are given in Section \ref{app:th3}.
After say $m$ iterations, the algorithm reaches the optimal parameters of the model and the estimates $\hat{\lambda}, \hat{\mu}$ and $\hat{\nu}$ are given by $$ (\hat{\lambda}, \hat{\mu},\hat{\nu})= g(p_{0,0}^{m},p_{0,1}^{m},p_{1,0}^{m}).$$
Note that the LBDI dynamics of $X$ together with the theoretical link between $X$ and $Y$ are only encoded in the initial model $M^0 = (Q^0,\psi^0,\rho^0)$ that is computed based on some triple $(\lambda^0,\mu^0,\nu^0)$ from Lemma \ref{lem:KolmoY} and Theorem \ref{main0}.
As usual for EM algorithm, the outcome is highly dependent on the initial condition. One must also tune the number $m$ of iterations and the way to truncate the infinite sums. All these points are discussed on synthetic data in Section \ref{sec:num}.
%
\section{Proofs}
\label{sec:proof}
This section is dedicated to the proofs of our main results.
\subsection{Proof of Lemma \ref{lem:KolmoY}}
\label{app:KolmoY}
%
\begin{proof}
We use the infinitesimal characterization of the distribution of the process.
For $h > 0$ and $j \leq i+y$, one has
\begin{align*} 
\lefteqn{p_{i,(j, y)}(t+h)}\\
&= \mathbb{P}\left(X_{t+h}=j ,Y_{ (0, t+h]}=y \mid X_{0}=i\right) \\ 
&= \mathbb{P}\left( X_{h}=i ,Y_{ (0, h]}=0,  X_{t+h}=j,Y_{ (h, t+h]}=y\mid X_{0}=i\right)  \\ 
&\quad + \mathbb{P}\left(X_{h}=i+1 ,Y_{ (0, h]}=0,  X_{t+h}=j,Y_{ (h, t+h]}=y \mid X_{0}=i\right) \\ 
&\quad +\mathbb{P}\left(X_{h}=i-1 ,Y_{ (0, h]}=1,  X_{t+h}=j,Y_{ (h, t+h]}=y-1   \mid X_{0}=i\right)+o(h) \\ 
&= \mathbb{P}\left( X_{h}=i ,Y_{ (0, h]}=0\mid X_{0}=i\right) \mathbb{P}\left( X_{t+h}=j,Y_{ (h, t+h]}=y\mid X_{h}=i\right)  \\ 
&\quad + \mathbb{P}\left(X_{h}=i+1 ,Y_{ (0, h]}=0 \mid X_{0}=i\right)  \mathbb{P}\left(X_{t+h}=j,Y_{ (h, t+h]}=y \mid X_{h}=i+1\right) \\ 
&\quad +\mathbb{P}\left(X_{h}=i-1 ,Y_{ (0, h]}=1\mid X_{0}=i\right)\mathbb{P}\left(X_{t+h}=j,Y_{ (h, t+h]}=y-1   \mid X_{h}=i-1\right)+o(h).
\end{align*}
Since $(X_t)$ is a homogeneous Markov process, and the jumps of $Y$ are directed by the jumps of $X$, we get 
\begin{align*} 
p_{i,(j, y)}(t+h)&= (1-((\lambda +\mu) i+\nu) h) p_{i,(j, y)}(t)\\
&\quad +(\lambda i+\nu) h p_{i+1,(j, y)}(t)+(\mu i) h p_{i-1,(j, -)}(t)+o(h).
\end{align*}
The result is obtained by subtracting $p_{i,(j, y)}(t)$ on both sides and letting $h$ go to zero.
\end{proof}
%
\subsection{Proof of Theorem \ref{main0}}
\label{app:th1}
%
To prove Theorem \ref{main0},  we update the method of characteristics, see \cite{fritzPartial}, to obtain a standard first-order linear partial differential equation for the probability generating function of $X_t$. Then, we derive an expression for the transition probabilities by using well-known results on series expansion. Although we made the assumption that $\lambda<\mu$, we give general formulas including the case $\lambda\geq \mu$ as they are interesting per se.
First, we need some technical results. 
%
\begin{lemma} \label{lemma1}
Let $i\in\mathbb{N}$ and $G(s,t)$ be a differentiable function on $[0, +\infty)^2$. The solution of the following linear partial differential equation 
\begin{align} \label{eq4}
 \left\{\begin{array}{ll}
 \dfrac{\partial G(s,t)}{\partial t} &=  \left( \mu - \lambda s \right) \left( 1-s \right) \dfrac{\partial G(s,t)}{\partial s} +  \nu(s-1) G(s,t), \\
  G(s,0) &=  s^i,
\end{array}\right.
\end{align}
is given by  
\begin{align*} 
G(s,t) =  \left\{\begin{array}{ll}
\dfrac{\left(\mu - \lambda \right)^{\frac{\nu}{\lambda}} \Big( s (\mu e^{(\lambda - \mu) t } - \lambda ) - \mu \left( e^{(\lambda - \mu) t } -1 \right)\Big)^{i}}{\Big(  \mu - \lambda  e^{(\lambda - \mu) t } + \lambda s \left( e^{(\lambda - \mu) t } -1 \right)  \Big)^{\frac{\nu}{\lambda} + i}} & \hspace*{1cm} \lambda \neq \mu, \\
\left( \dfrac{\lambda t + s \left(1- \lambda t \right)}{1+ \lambda t - \lambda ts}\right)^{i}\left(1+ \lambda t - \lambda ts  \right)^{-\frac{\nu}{\lambda}} & \hspace*{1cm}  \lambda = \mu.
\end{array}\right.
\end{align*}
\end{lemma}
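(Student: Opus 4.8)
The plan is to solve the linear first-order PDE (\ref{eq4}) by the method of characteristics and then fix the free function of integration using the initial data $G(s,0)=s^i$. Since (\ref{eq4}) is first order and linear in the unknown $G$, the characteristic system is
\begin{align*}
\frac{dt}{1}=\frac{ds}{-(\mu-\lambda s)(1-s)}=\frac{dG}{\nu(s-1)G}.
\end{align*}
I would first integrate the $s$--component. The quadratic $(\mu-\lambda s)(1-s)$ has roots $s=1$ and $s=\mu/\lambda$, so a partial-fraction decomposition of $\bigl[(\mu-\lambda s)(1-s)\bigr]^{-1}$ integrates to $\tfrac{1}{\mu-\lambda}\ln\tfrac{\mu-\lambda s}{1-s}=-t+C$, giving the first integral
\begin{align*}
\xi=\frac{\mu-\lambda s}{1-s}\,e^{(\mu-\lambda)t},
\end{align*}
which is constant along characteristics (for $\lambda\neq\mu$).

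Next I would obtain a second invariant by pairing the $s$-- and $G$--components. The key simplification is that $(s-1)=-(1-s)$ cancels the factor $(1-s)$, reducing the equation to $\tfrac{dG}{G}=\tfrac{\nu\,ds}{\mu-\lambda s}$, which integrates immediately to
\begin{align*}
\eta=G\,(\mu-\lambda s)^{\nu/\lambda}.
\end{align*}
The general solution is thus $\eta=F(\xi)$ for an arbitrary $C^1$ function $F$, i.e. $G=(\mu-\lambda s)^{-\nu/\lambda}F(\xi)$. To determine $F$, I impose $G(s,0)=s^i$: at $t=0$ one has $\xi=w:=\tfrac{\mu-\lambda s}{1-s}$, and inverting gives $s=\tfrac{w-\mu}{w-\lambda}$ and $\mu-\lambda s=\tfrac{(\mu-\lambda)w}{w-\lambda}$, so that
\begin{align*}
F(w)=\Bigl(\tfrac{w-\mu}{w-\lambda}\Bigr)^{i}\Bigl(\tfrac{(\mu-\lambda)w}{w-\lambda}\Bigr)^{\nu/\lambda}.
\end{align*}
Substituting $w=\xi$ back into $G=(\mu-\lambda s)^{-\nu/\lambda}F(\xi)$ and simplifying the powers of $e^{(\lambda-\mu)t}$ should yield the stated closed form for $\lambda\neq\mu$. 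For the boundary case $\lambda=\mu$, the $s$--characteristic ODE degenerates to $\tfrac{ds}{dt}=-\lambda(1-s)^2$ with first integral $\tfrac{1}{1-s}+\lambda t$; rerunning the same argument (or passing to the limit $\mu\to\lambda$ in the formula above) gives the second expression.

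I expect the main obstacle to be the final algebraic simplification: matching $(\mu-\lambda s)^{-\nu/\lambda}F(\xi)$ to the compact displayed form requires carefully rewriting $\xi-\mu$, $\xi-\lambda$ and $\xi$ in terms of $e^{(\lambda-\mu)t}$ while correctly combining the exponents $\nu/\lambda$ and $\nu/\lambda+i$ that appear in numerator and denominator. A secondary point is to verify consistency of the branch choices for the logarithms and fractional powers on the relevant domain (e.g.\ $s\in[0,1]$ with $\mu>\lambda$), and to check directly that the constructed $G$ satisfies both (\ref{eq4}) and the initial condition, so that uniqueness for linear first-order PDEs identifies it as \emph{the} solution.
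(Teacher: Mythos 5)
Your proposal is correct and takes essentially the same route as the paper: the method of characteristics, with your first integral $\xi=\frac{\mu-\lambda s}{1-s}\,e^{(\mu-\lambda)t}$ being just the exponentiated form of the paper's characteristic constant, the same invariant $G\,(\mu-\lambda s)^{\nu/\lambda}$ along characteristics, and the same determination of the free function from $G(s,0)=s^i$ by inverting the characteristic variable (your inversion $s=\frac{w-\mu}{w-\lambda}$ is exactly the paper's). The final substitution you defer does simplify to the stated closed form, and your handling of the degenerate case $\lambda=\mu$ matches the paper's as well.
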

%
\begin{proof}
To solve Equation (\ref{eq4}) we make use of above mentioned method of characteristics.\\
\textit{Step 1. Determining the Characteristics.} Consider the characteristic equation  
\begin{align*} 
\dfrac{d t}{d s}=-\dfrac{1}{(\mu-\lambda s)(1-s)}=\left\{\begin{array}{ll}
\dfrac{1}{\lambda-\mu}\left(\dfrac{1}{1-s}-\dfrac{\lambda}{\mu-\lambda s}\right)& \ \ \  \text { if }\ \ \lambda \neq \mu\\
-\dfrac{1}{\lambda(1-s)^{2}} & \ \ \  \text {if } \ \ \lambda=\mu.
\end{array}\right.
\end{align*}
By integrating with respect to $s$, we have  
\begin{align}  \label{eq19}
t = k - \left\{\begin{array}{ll}
\dfrac{1}{\lambda - \mu} \ln \left(\dfrac{1-s}{\mu -\lambda s }\right) & \ \ \ \text { if }\ \ \lambda \neq \mu\\
\dfrac{1}{\lambda\left( 1-s\right)} & \ \ \ \text { if } \  \ \lambda=\mu.
\end{array}\right.
\end{align}
Equation (\ref{eq19}) defines a family of characteristics parameterized by the integration constant $k$.\\
\textit{Step 2.} Along the characteristics in (\ref{eq19}),  $G$ satisfies 
\begin{align*} 
\dfrac{dG}{ds}= \dfrac{\nu G}{\mu -\lambda s}.
\end{align*}
Integrating with respect to $s$ yields
\begin{align*} 
G = C(k) \left(\mu - \lambda s\right)^{-\frac{\nu}{\lambda}},
\end{align*}
where $C(k)$ is another  integration constant, function of the  parameter k. Using Equation (\ref{eq19}), we obtain 
\begin{align}\label{eq20}
G(s,t) = \left\{\begin{array}{ll}
C\left(t + \dfrac{1}{\lambda - \mu} ln \left(\dfrac{1-s}{ \mu - \lambda s }\right)\right)\left(\mu - \lambda s\right)^{-\frac{\nu}{\lambda}}  & \ \ \ \text { if }\ \  \lambda \neq \mu \\
C\left(t+\dfrac{1}{\lambda\left(1-s\right)}\right) \left(\lambda \left(1- s\right)\right)^{-\frac{\nu}{\lambda}} & \ \ \  \text {if } \  \ \lambda=\mu.
\end{array}\right.
\end{align}
Now, we use the initial condition $G(s,0) = s^i$ to compute $C(k)$. 
Setting  $t = 0$ in equation (\ref{eq20}), we get 
\begin{align*}
s^i = \left\{\begin{array}{ll}
C\left( \dfrac{1}{\lambda - \mu} ln \left(\dfrac{1-s}{ \mu - \lambda s }\right)\right)\left(\mu - \lambda s\right)^{-\frac{\nu}{\lambda}}  & \ \ \ \text { if }\ \  \lambda \neq \mu \\
C\left(\dfrac{1}{\lambda\left(1-s\right)}\right) \left(\lambda \left(1- s\right)\right)^{-\frac{\nu}{\lambda}} & \ \ \  \text { if } \  \ \lambda=\mu,
\end{array}\right.
\end{align*}
so that one has
\begin{align*}
C(w) = \left\{\begin{array}{ll}
\left( \dfrac{e^{\left(\lambda - \mu \right)w} \mu -1}{e^{\left(\lambda - \mu \right)w} \lambda -1} \right)^i \left(\mu - \lambda \dfrac{e^{\left(\lambda - \mu \right)w} \mu -1}{e^{\left(\lambda - \mu \right)w} \lambda -1}\right)^{\frac{\nu}{\lambda}} & \ \ \ \text { if }\ \  \lambda \neq \mu \\
\left(1-\dfrac{1}{\lambda w} \right)^i w^{-\frac{\nu}{\lambda}} & \ \ \  \text { if } \  \ \lambda=\mu,
\end{array}\right.
\end{align*}
where 
\begin{align*}
w= \left\{\begin{array}{ll}
\dfrac{1}{\lambda - \mu} ln \left(\dfrac{1-s}{ \mu - \lambda s }\right)  & \ \ \ \text { if }\ \  \lambda \neq \mu \\
\dfrac{1}{\lambda\left(1-s\right)} & \ \ \  \text { if} \  \ \lambda=\mu.
\end{array}\right.
\end{align*}
Finally, we obtain  the required expression for $ \lambda \neq \mu$, 
\begin{align*}
 G_{i}(s,t) =  \dfrac{\left(\mu - \lambda \right)^{\frac{\nu}{\lambda}} \Big( s (\mu e^{(\lambda - \mu) t } - \lambda ) - \mu \left( e^{(\lambda - \mu) t } -1 \right)\Big)^{i}}{\Big(  \mu - \lambda  e^{(\lambda - \mu) t } + \lambda s \left( e^{(\lambda - \mu) t } -1 \right)  \Big)^{\frac{\nu}{\lambda} + i}},
 \end{align*}
and for $\lambda = \mu$ 
\begin{align*}
G_{i}(s,t) =   \left( \dfrac{\lambda t + s \left(1- \lambda t \right)}{1+ \lambda t - \lambda ts}\right)^{i}\left(1+ \lambda t - \lambda ts  \right)^{-\frac{\nu}{\lambda}},
 \end{align*}
hence the result.
\end{proof}

We can now turn to the proof of Theorem \ref{main0}.
%
\begin{proof} of Theorem \ref{main0}\\
Let us define the probability generating function of $X_t$: for any $(s,t ) \in \times [0, +\infty)^2$, and $ i \in \mathbb{N} $ set
\begin{align*}
G_{i}(s,t) = \sum_{j=0}^{+\infty} s^j p_{i, j}(t).
\end{align*}  
The forward Kolmogorov equation (\ref{eq1}) yields
\begin{align*}
\frac{\partial G_{i}(s,t)}{\partial t}
 &= \left( \lambda s^2 -\left( \lambda + \mu \right) s + \mu \right)\sum_{j=1}^{\infty} j s^{j-1} p_{i, j}(t) + \nu(s-1)  \sum_{j=0}^{\infty} s^j p_{i, j}(t),
\end{align*}  
so that the probability generating function satisfies the following  first-order linear partial differential equation :  
\begin{align*}
\frac{\partial G_{i}(s,t)}{\partial t} = \left( \mu - \lambda s \right) \left( 1-s \right) \frac{\partial G_{i}(s,t)}{\partial s} +  \nu(s-1) G_{i}(s,t),
\end{align*}  
with initial condition $G_i(s,0) = s^i.$
Now, using Lemma \ref{lemma1}, we have for $ \lambda \neq \mu$, 
\begin{align*}
G_{i}(s,t) =  \frac{\left(\mu - \lambda \right)^{\frac{\nu}{\lambda}} \Big( s (\mu e^{(\lambda - \mu) t } - \lambda ) - \mu \left( e^{(\lambda - \mu) t } -1 \right)\Big)^{i}}{\Big(  \mu - \lambda  e^{(\lambda - \mu) t } + \lambda s \left( e^{(\lambda - \mu) t } -1 \right)  \Big)^{\frac{\nu}{\lambda} + i}}\end{align*}
and for $\lambda = \mu$ 
\begin{align*}
G_{i}(s,t) =   \left( \frac{\lambda t + s \left(1- \lambda t \right)}{1+ \lambda t - \lambda ts}\right)^{i}\left(1+ \lambda t - \lambda ts  \right)^{-\frac{\nu}{\lambda}}.  
 \end{align*}
The case  $i = 0$ gives the following generating function of the transition probabilities $p_{0,j}(t)$, $t\geq 0$ 
\begin{align*}
G_0( s,t )  =  \left\{\begin{array}{ll}
 \Big(\frac{\mu - \lambda }{\mu - \lambda e^{(\lambda - \mu) t }} \Big)^{\frac{\nu}{\lambda}} \Big(1- \frac{ \lambda  ( 1-e^{(\lambda - \mu) t } )}{\mu - \lambda e^{(\lambda - \mu) t }} s  \Big)^{-\frac{\nu}{\lambda}}& \hspace*{1cm} \lambda \neq \mu, \\
\Big(\dfrac{1}{1+\lambda t}\Big)^{\frac{\nu}{\lambda}} \Big(1- \frac{\lambda t}{1+ \lambda t}s  \Big)^{-\frac{\nu}{\lambda}} & \hspace*{1cm}  \lambda = \mu.
\end{array}\right.
 \end{align*}
It is the generating function of the generalized negative binomial distribution with parameters 
\begin{align*}
r = \dfrac{\nu}{\lambda} \quad \textrm{and} \quad  q(t) = \left\{\begin{array}{ll}
\dfrac{\mu -\lambda}{ \mu - \lambda e^{(\lambda - \mu) t }} & \hspace*{1cm} \lambda \neq \mu, \\
 \dfrac{1}{1+\lambda t} & \hspace*{1cm}  \lambda = \mu.
\end{array}\right.
 \end{align*}
Therefore, for $j \in \mathbb{N}$ and $t \geq 0$,  one has
\begin{align}\label{eq80}
 p_{0,j}(t)= 
{j + r -1  \choose r -1}
q(t)^r (1-q(t))^j.
 \end{align}
The  general form  of $G_i$  for $i \not=0$ can be written as follows: 
\begin{align}\label{eq81}
 G_i(s,t)=G_0(s,t) N_i(s,t),
 \end{align}
where 
\begin{align*} N_i( s,t )  =  \left\{\begin{array}{ll}
 \Big(\dfrac{\mu - \lambda s - \mu(1-s) e^{(\lambda - \mu) t }}{\mu - \lambda s - \lambda(1-s) e^{(\lambda - \mu) t }} \Big)^i   & \hspace*{1cm} \lambda \neq \mu, \\
\left( \dfrac{1+\left(\lambda t -1 \right)\left(1-s \right)}{1+ \lambda t \left(1-s \right)}\right)^{i} & \hspace*{1cm}  \lambda = \mu,
\end{array}\right.
 \end{align*}
is the probability generating function of a linear  birth–death process without immigration. In essence, Equation (\ref{eq81}) reflects the independence between the process of contamination from the environment ($G_0$) and the self-contamination of the population ($N$). The linear birth and death process without immigration has the branching property that translates into the simple power form of its probability generating function, see for instance \cite{tavare2018linear} or \cite{bailey1991elements}. One can readily retrieve its transition probabilities $p^{(N)}_{i,j}(t), t\geq 0$ from the series expansion of $N_i(s, t)$:
\begin{align}\label{eq82}
p^{(N)}_{i,j}(t) = \sum_{l=0}^{\min (i, j)}
{i \choose l}
{i+j-l-1 \choose i-1}
\alpha(t)^{i-l} \beta(t)^{j-l}(1-\alpha(t)-\beta(t))^{l},
 \end{align}
where $\alpha(t) = \dfrac{\mu}{\lambda} (1-p(t))$ and $\beta(t) = 1-q(t)$.
Thus, the probabilities $p_{i,j}(t)$ are given by the following convolution  from Equation (\ref{eq81}):
\begin{align*} 
p_{i,j}(t) = \sum_{k=0}^{j} p_{0,k}(t) p^{(N)}_{i,j-k}(t),
 \end{align*}
and Equations (\ref{eq80}), (\ref{eq82}), finally yield the closed formulas
\begin{align*}
p_{i,j}(t) &= q(t)^r \sum_{l=0}^{min(i,j)} 
{i \choose l}
\left(\frac{\mu}{\lambda}\right)^{i-l} \left(1-q(t) \right)^{i+j-l}\left( \frac{1-(\frac{\mu}{\lambda}+1)(1-q(t))}{1-q(t)} \right)^{l}   \\ \nonumber
&\quad \times \sum_{k=0}^{j-l} 
{ r -1+k \choose  r -1}
{ i-1+j-l-k  \choose  i-1},
 \end{align*}
hence the result.
\end{proof}

From the previous proof, one can also retrieve the well known expression for the mean of the process $m(t)$
\begin{align*}
m(t) = \mathbb{E}\left[ X_{t}\mid X_{0} =i\right] = \left\{\begin{array}{ll}
  \frac{\nu}{ \lambda -\mu} \big( e^{(\lambda - \mu) t } -1 \big) +  i e^{(\lambda - \mu) t } & \hspace*{.5cm} \lambda \neq \mu, \\
 \nu t + i & \hspace*{.5cm}  \lambda = \mu,
\end{array}\right.
 \end{align*}
by derivating the probability generating function with respect to $s$ and taking $s=1$. 
The above result on the expectation is consistent with that obtained in \cite{ross2014introduction} using another method which consists in deriving an equation for $m(t + h)$, conditioning on $X_{t}$, and then solving a differential equation on $m(t)$ directly. 
%
\subsection{Proof of Theorem \ref{main1}}
\label{app:th2}
%
\begin{proof}
We make use of Equation (\ref{eq2}) for $(i,j)\in\{(0,0), (0,1), (1,0)\}$ and $t=\Delta t$ to obtain
\begin{align*}
p_{0,0}&= q^r,\\
p_{0,1}&= q^rr\left(1-q \right),\\
p_{1,0}&= q^r\left(\frac{\mu}{\lambda}\right)\left(1-q \right),
\end{align*}
with $r= \frac{\nu}{\lambda}$ and $q=q(\Delta t)=\frac{\mu -\lambda}{ \mu - \lambda e^{(\lambda - \mu) \Delta t}}$. 
The first $2$ equations above are equivalent to the system
\begin{align}\label{eq11}
\left\{\begin{array}{ll}
\ln(p_{0,0})  \ & = r\ln(q)  \\
\frac{p_{0,1}}{p_{0,0}}   \ & = r \left(1-q \right),
\end{array} \right.
\end{align}
which yields
\begin{align*}
\frac{p_{0,0}}{p_{0,1}} \ln(p_{0,0}) q e^{\frac{p_{0,0}}{p_{0,1}} \ln(p_{0,0}) q} = \frac{p_{0,0}^{\left(\frac{p_{0,0}}{p_{0,1}} +1 \right)} \ln(p_{0,0})}{p_{0,1}}, 
\end{align*}
so that  one has
\begin{align}\label{eq12}
q=\frac{p_{0,1}}{p_{0,0}(t) \ln(p_{0,0})} W\left( \frac{p_{0,0}^{\left(\frac{p_{0,0}}{p_{0,1}} +1 \right)} \ln(p_{0,0})}{p_{0,1}} \right),
\end{align}
 where $W$ is the Lambert function. Now, from Equations (\ref{eq11}) and (\ref{eq12}), we can easily find that
\begin{align*}
 r = \frac{\ln(p_{0,0})}{\ln(q)}.
\end{align*}
Now set
\begin{align*}
u= q e^{(\lambda-\mu)\Delta t}  = 1-\frac{p_{1,0}}{p_{0,0}}.
\end{align*}
Equation (\ref{eq2}) for $(i,j)\in\{(0,0), (0,1), (1,0)\}$ can now be rewritten as
\begin{align}\label{eq15}
 \left\{\begin{array}{ll}
 r  \ & = \frac{\nu}{\lambda} \\ 
 q  \ &  = \frac{\mu -\lambda}{ \mu - \lambda e^{(\lambda - \mu)\Delta t }} \\ 
 u \ & =  \frac{\left(\mu -\lambda \right)e^{(\lambda - \mu) \Delta t }}{ \mu - \lambda e^{(\lambda - \mu) \Delta t }}.\end{array} \right. 
\end{align}
Finally, solving System (\ref{eq15}) gives rise to the expressions of $\lambda,\mu$ and $\nu$ in function of the transition probabilities $p_{0,0},p_{0,1}$ and $ p_{1,0}$ given in Theorem \ref{main1}. Note that these expressions are unique so that the parameters are identifiable.
\end{proof}
%
\subsection{Proof of Theorem \ref{th:HMMiter}}
\label{app:th3}
%
\begin{proof}
Suppose the initial model is $M=(Q,\psi,\rho)$, and recall that $Z_n=(X_{n-1},X_n,Y_n)$ starts at $n=1$.
The forward probability $\alpha_{(i,j)}(t)$ is defined as the joint probability of observing the first $t$ values of $Y$ and being in state $(i,j)$ at time $t$. 
The Markov property directly yields the following recursion
$$
\begin{cases}
\alpha_{(i,j)}(1)=  \rho_{(i,j)}\psi_{(i,j)}\left(y_{1}\right),\\
\alpha_{(i,j)}(t)=\psi_{(i,j)}\left(y_{t}\right) p_{i,j} \sum_{i'\in\mathbb{N}} \alpha_{(i',i)}(t-1) \quad &  1 < t \leq T.
\end{cases}
$$
Similarly, the backward probability $\beta_{(i,j)}(t)$ is defined as the conditional probability of observing le last $T-t$ values of $Y$ after time $t$, given that the state at time $t$ is $(i,j)$. The Markov property also easily yields the recursive formula 
$$
\begin{cases}
\beta_{(i,j)}(T)= 1, \\
\beta_{(i,j)}(t)=  \sum_{j'\in\mathbb{N}} p_{j,j'} \psi_{(j,j')}\left(y_{t+1}\right) \beta_{(j,j')}(t+1) \quad &  1 \leq t < T.
\end{cases}
$$
The likelihood of the observations rewrites as follows by summing all the forward and backward products : $$ \mathbb{P}(Y_{1:t}= y_{1:t} | M ) = \sum_{i\in\mathbb{N}} \sum_{j\in\mathbb{N}} \alpha_{(i,j)}(t) \beta_{(i,j)}(t). $$

Now, let us consider the conditional probability of a transition from state $(i,j)$ at time $t$ to state $(i',j')$ at time $t+1$, given the observations $Y_{1:T} = y_{1:T}$. It can be expressed as
\begin{eqnarray}\nonumber
\xi_{(i,j),(i',j')}(t) &=& \mathbb{P}(Z_t=(i,j), Z_{t+1} = (i',j') |Y_{1:T} = y_{1:T}, (Q,\psi,\rho) ) \\ \nonumber 
&=& \dfrac{\alpha_{(i,j)}(t) p_{(i',j')}\psi_{(i',j')}(y_{t+1}) \beta_{(i',j')}(t+1) }{ \mathbb{P}(Y_{1:T} = y_{1:T} | (Q,\psi,\rho) )} \delta_{i'=j}. 
\end{eqnarray}
Denote by  $ \gamma_{(i,j)}(t) = \sum_{j'\in\mathbb{N}} \xi_{(i,j),(j,j')}(t)$ the conditional probability of being in state $(i,j)$ at time $t$, given the observations $Y_{1:T} = y_{1:T}$. One has
$$ \gamma_{(i,j)}(t) = \dfrac{\alpha_{(i,j)}(t) \beta_{(i,j)}(t)}{\mathbb{P}(Y_{1:T}= y_{1:T} | (Q,\psi,\rho) }.$$

Hence the updated estimation of the model is estimations by one step of the adapted Baum-Welch algorithm  is
\begin{eqnarray}\nonumber
Q_{(i,j),(i',j')}^{(n+1)} &=& \dfrac{\sum_{t=0}^{T-1}\xi_{(i,j),(i',j')}(t)}{\sum_{t=0}^{T-1} \gamma_{(i,j)}(t) } \delta_{i'=j}, \\ \nonumber
 \psi_{(i,j)}^{(n+1)}(y) &=& \dfrac{\sum_{t=0}^{T} \mathbbm{1}_{y_t = y} \gamma_{(i,j)}(t)}{\sum_{t=0}^{T} \gamma_{(i,j)}(t)},\\ \nonumber
 \rho_{i,j}^{(n+1)}&=& \gamma_{(i,j)}(t),
\end{eqnarray}
as expected.
\end{proof}
%
\section{Numerical study}
\label{sec:num}
%
We now investigate the numerical performance of our estimation procedure both on synthetic and real data.
All codes were developed in R on a laptop with a $1.10$ GHz processor. To implement the Lambert-W function, we use the lambertW R package \cite{Goerg}. For Baum-Welch algorithm we adapted functions from the package HMM \cite{himmelmann2022package}.
%
\subsection{Synthetic data}
\label{sec:num-simu}
We first explore the performance of the estimators $ \hat{\lambda}^n,\hat{\mu}^n$ and $\hat{\nu}^n$ on simulated data, and investigate the sensitivity of the estimation to changes in the main hyper-parameters of the procedure. In order to quantify the loss of information due to the hidden information, we start with the setting where $(X_n)$ is observed, then turn to the HMM framework.
%
\subsubsection{Observations of infected counts}
\label{sec:num-simu-X}
%
When the discrete time Markov chain $(X_n)$ is observed, one can use the maximum likelihood estimates for the transition probabilities ${p}_{i,j}$, see e.g. \cite{athreya1992bootstrapping}
\begin{align*}
\hat{p}^n_{i,j} = \frac{\sum_{k=0}^{n-1} \mathbbm{1}_{\{X_{k} =i, X_{k+1}=j\}}}{\sum_{k=0}^{n-1} \mathbbm{1}_{\{X_k =i\}}}.
\end{align*}
%
We simulated a LBDI process starting from $X_{0}=0$ 
with  $\lambda=0.03 $, $ \mu=0.1 $ and $\nu=0.01$, until a (continuous) time horizon $H$ for several values of $\Delta t$. To estimate the transition probabilities between discrete states by the maximum likelihood method (MLE), we use the markovchainFit function of the markovchain package for discrete-time Markov chain models \cite{spedicato2016markovchain}. 

Table \ref{table1} presents the estimated values of the parameters with the asymptotic standard error (asymptotic standard deviation divided by $\sqrt{n}$) calculated from Corollary \ref{main2} with matrix $\Sigma'$ from Equation (\ref{eq=sigma'}). Figure \ref{fig:variance} shows the asymptotic standard deviations for all $3$ estimators in function of the time lapse $\Delta t$.
For convenience, the numerical values of the asymptotic standard deviations are also provided in Table~\ref{table-sigma}.
%
\begin{table}[hpt]
\caption{Estimated values and standard error when the infected counts are observed with period $\Delta t$ and time horizon $H$ (synthetic data, infected observed, true parameter values $\lambda=0.03 $, $ \mu=0.1 $ and $\nu=0.01$)}
\label{table1}
\begin{center}
\begin{tabular}{@{}cccc@{}}
\toprule%
& \multicolumn{3}{@{}c@{}}{$H=1000$}  \\
\cmidrule{2-4} %
Time lapse  $\Delta t$& $1$ & $7$ & $30$ \\
Number of observations $n$ & $1001$&$143$&$34$ \\
\midrule
$\hat\lambda^n$  & 0.027 (0.100)  & 0.081 (0.053)& 0.049 (0.065) \\
$\hat\mu^n$  & 0.084 (0.035)  & 0.105 (0.046) & 0.070 (0.128)  \\
$\hat\nu^n$ & 0.012 (0.484)  & 0.007 (0.106) & 0.007 (0.076)  \\
\hline
\end{tabular}
\begin{tabular}{@{}cccc@{}}
\toprule%
&\multicolumn{3}{@{}c@{}}{$H=5000$}   \\
\cmidrule{2-4} %
Time lapse  $\Delta t$& $1$ & $7$ & $30$ \\
Number of observations $n$ & $5001$&$715$&$167$\\
\midrule
$\hat\lambda^n$  & 0.041 (0.045)  & 0.052 (0.024)&0.064 (0.029)\\
$\hat\mu^n$  &0.088 (0.016) & 0.132 (0.020) & 0.117 (0.058)\\
$\hat\nu^n$ & 0.009 (0.216) & 0.012 (0.047) & 0.009 (0.034) \\
\hline
\end{tabular}
\begin{tabular}{@{}cccc@{}}
\toprule%
&\multicolumn{3}{@{}c@{}}{$H=50\ 000$}  \\
\cmidrule{2-4} %
Time lapse  $\Delta t$& $1$ & $7$ & $30$ \\
Number of observations $n$ &$50001$&$7143$&$1667$\\
\midrule
$\hat\lambda^n$  &0.030 (0.014)  &0.023 (0.007)& 0.020 (0.009)\\
$\hat\mu^n$  &  0.098 (0.004)  & 0.097 (0.006)&0.102 (0.018) \\
$\hat\nu^n$ & 0.010 (0.068)  & 0.010 (0.014)& 0.010 (0.011)  \\
\hline
\end{tabular}
\end{center}
\end{table}
%
\begin{figure}[hpt]
    \centering
    \includegraphics[scale=0.8]{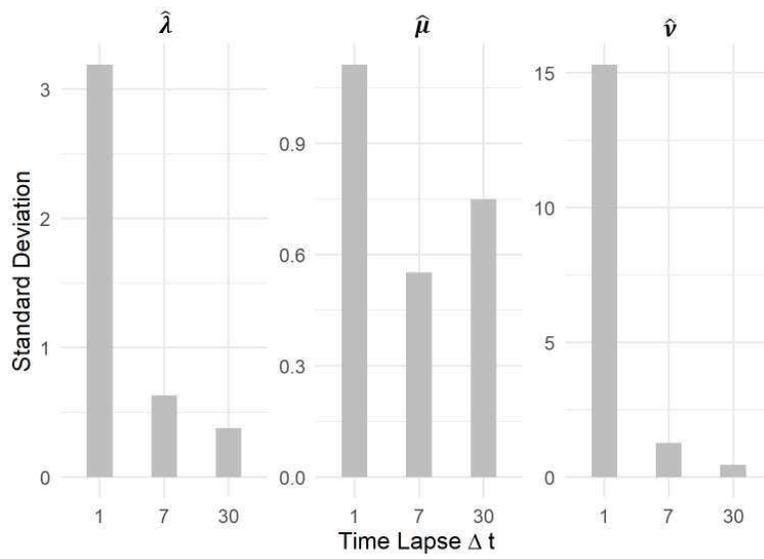} 
    \caption{Asymptotic standard deviations as a function of the time lapse $\Delta t$ for estimators $\hat \lambda$, $\hat\mu$ and $\hat\nu$ obtained from the maximum likelihood estimates of the transition probabilities (synthetic data, infected observed, true parameter values $\lambda=0.03 $, $ \mu=0.1 $ and $\nu=0.01$).}
    \label{fig:variance}
\end{figure}
%

Table \ref{table1} shows the convergence of the estimators as the number of observations increases. A high number of observations is required to capture the right order of magnitude. This confirms that even in a complete observation framework, retrieving the parameters from discrete observations of the process is demanding.

Figure \ref{fig:variance} illustrates that the asymptotic variance of the estimators strongly depends on $\Delta t$, and increases as $\Delta t$ decreases, with a different magnitude for the three coefficients, the impact being stronger on the third coefficient $\nu$ (immigration). This suggest a possible optimal choice to be made between high frequency observations with high variance versus low frequency ones with smaller variance. However in medical data collection practice the  sampling frequency may not be chosen by the statistician.
%
\subsubsection{Observations of cumulated new isolated counts}
\label{sec:num-simu-Y}
%
We now turn to the more realistic case when only the cumulated new isolated counts $(Y_n)$ are observed. 
We simulated a single trajectory of a LBDI process starting from its invariant distribution $X_{0}\sim\pi$ with the same parameters as in the previous section $\lambda=0.03 $, $ \mu=0.1 $ and $\nu=0.01$. Then we extracted the cumulated new infected counts for a fixed value of $\Delta t = 1$, see Fig. \ref{fig:my_label2} and ran the HMM procedure described in Section \ref{sec:HMM}. This was repeated $100$ times in order to capture the empirical estimation error. Additional simulation results are also provided in the next Section \ref{sec:dataARS} with a similar setting but a different set of parameters.

Figure \ref{fig:my_label1+2} displays a sample trajectory for these parameters, with the hidden values of the infected counts on top (Fig. \ref{fig:my_label1}) and the observed cumulated declared cases on the bottom (Fig \ref{fig:my_label2}).
%
\begin{figure}[hpt]
    \centering
    \begin{subfigure}{.95\textwidth}
    \includegraphics[scale=0.4]{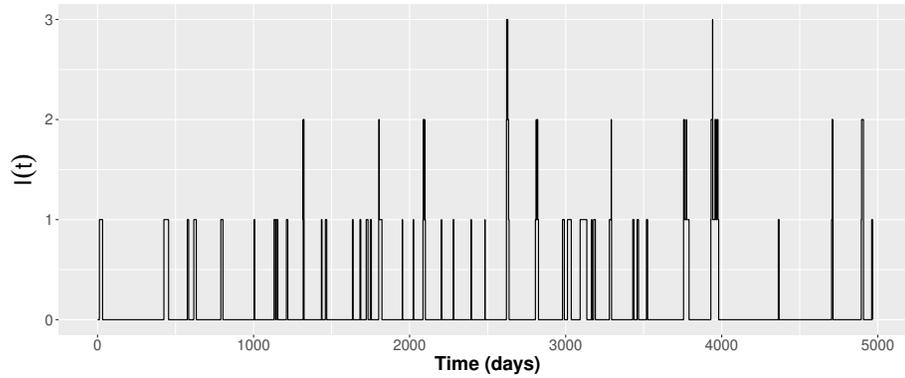} 
    \caption{Path of the birth-death process with immigration X(t)}
    \label{fig:my_label1}
    \end{subfigure}
    \begin{subfigure}{.95\textwidth}
    \includegraphics[scale=0.4]{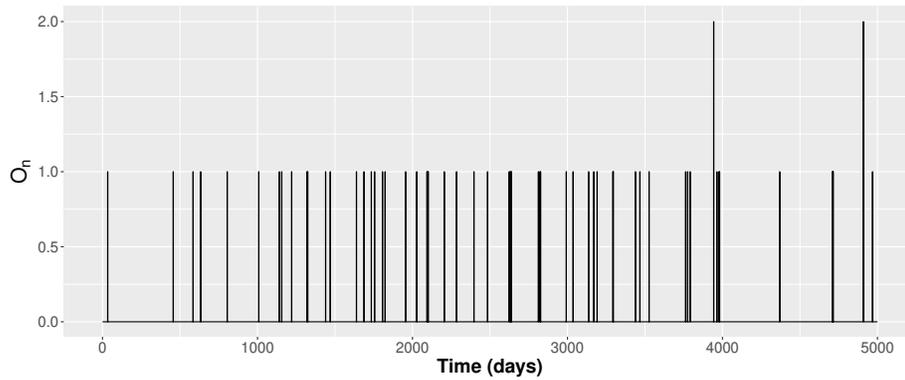} 
    \caption{The cumulated new isolated $Y_n$ corresponding to process $X$.}
    \label{fig:my_label2}
    \end{subfigure}
    \caption{Example of a sample path of the LBDI process in continuous time and the corresponding cumulated new isolated counts for a time lapse $\Delta t=1$ until a time horizon $H=5000$ with parameters $\lambda=0.03 $, $ \mu=0.1 $ and $\nu=0.01$ (synthetic data)}
  \label{fig:my_label1+2}  
\end{figure}

\paragraph{Tuning of the HMM} 
For the HMM procedure, we chose the initial parameters in the following ranges $\lambda^{(0)}\in[0.028,0.034], \mu^{(0)}\in[0.08,0.15] $ and $ \nu^{(0)}\in[0.008,0.015]$, the  maximum number of iterations is set to $500$ and the stopping criterion (norm of the difference between the current estimate and the previous one) to $10^{-9}$. We executed the HMM algorithm for each value of the input triple $\lambda^{(0)},\mu^{(0)}$ and $\nu^{(0)}$, and obtained output triples together with the (truncated) likelihood of the corresponding model. Recall that the likelihood of the model is an output of the algorithm as it is given by
\begin{align*}
\sum_{i,j} \alpha^n_{(i,j)}(T). 
\end{align*}
We then selected the best output triple as the one corresponding to the model with the highest likelihood
\begin{align*}
(\hat{\lambda}, \hat{\mu},\hat{\nu})= \argmax_{(\hat{\lambda}^n, \hat{\mu}^n,\hat{\nu}^n)} \mathbb{P}( Y_{1:T} |M^n). 
\end{align*}

\paragraph{Impact of the truncation parameter} 
Numerically, one cannot deal with infinite state spaces. Therefore, we truncated the transition matrix $P=(p_{i,j})$ on $\mathbb{N}^2$ to matrix $\bar P=(\bar p_{i,j})$ on the finite state space $\{0, 1, \ldots, N\}$ as follows. We calculated the coefficients $\left(p_{i,j}, 0 \leq i \leq N, 0 \leq j \leq N-1 \right) $ of the transition matrix $P$ with Equation (\ref{eq2}) and set
\begin{align*}
\bar p_{i,j}& = p_{i,j} \quad \quad \text{for} \ 0 \leq i \leq N, 0 \leq j \leq N-1,\\
\bar p_{i,N}&= 1 -\sum_{j=0}^{N-1} p_{i,j} \quad \quad \text{for} \ i=0,\ldots, N.
\end{align*}
so that $\bar P$ is still a transition matrix. This roughly corresponds to aggregating all states above $N$.
As we are only interested in the transition probabilities from $0$ to $0$, $0$ to $1$ and $1$ to $0$, one can expect that this truncation procedure should not have a big impact on the estimation. The sensitivity of the output to this truncation parameter is i displayed on Figure \ref{fig:N}, and detailed values are provided in Table \ref{table2}.
%
\begin{figure}[hpt]
    \centering
    \includegraphics[scale=0.8]{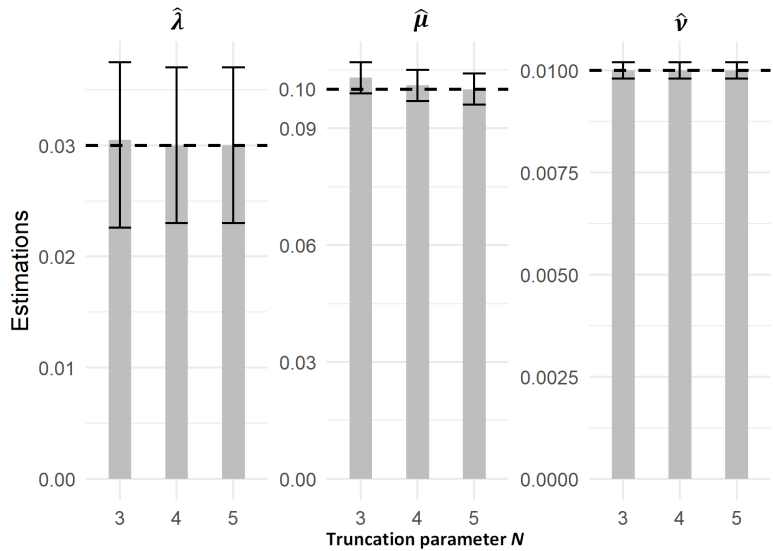} 
    \caption{Impact of the choice of the truncation parameter $N$ on the estimation with empirical $95\%$ confidence intervals ($100$ replications, synthetic data, infected hidden, true parameter values $\lambda=0.03 $, $ \mu=0.1 $ and $\nu=0.01$)}
    \label{fig:N}
\end{figure}

The reported values suggest that the choice of $N$ has a limited effect on the parameter estimates, with relatively similar values observed across different values of $N$, both in terms of estimated value and variance.  In terms of complexity, increasing the size of the truncation parameter $N$, significantly increases  the execution time of the EM algorithm, as expected, as the number of parameters to estimate is proportional to $(N+1)^2$. 

\paragraph{Impact of the time horizon} 
We set $N=5$ and we illustrate the impact of the time horizon $H$ on our estimation scheme on Figure \ref{fig:nobs}, see also Table \ref{table3} for the numerical values. As expected, the accuracy of the estimates shows gradual enhancement with a larger number of observations, accompanied by a reduction in the empirical standard error.
%
\begin{figure}[hpt]
    \centering
    \includegraphics[scale=0.8]{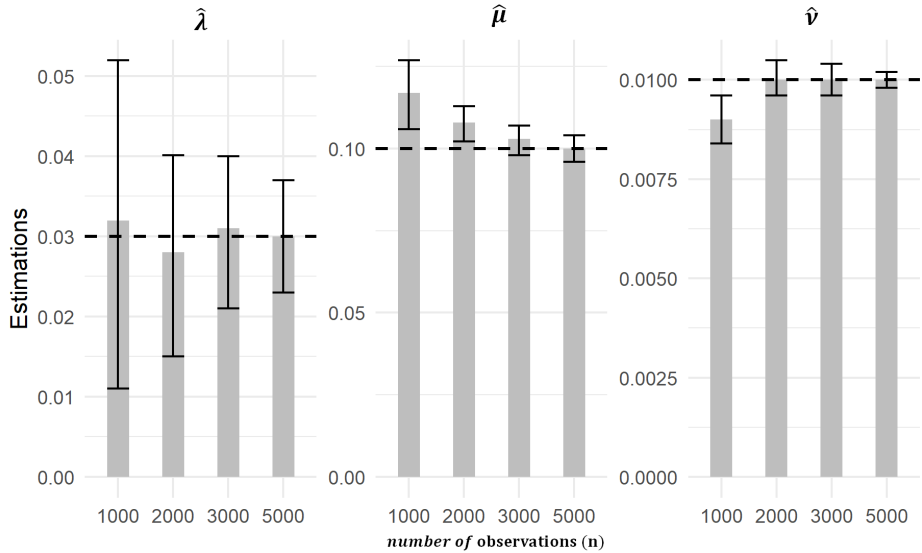} 
    \caption{Impact of the number of observations $n$ on the estimation on the estimation for a truncation parameter $N=5$ with empirical $95\%$ confidence intervals ($100$ replications, synthetic data, infected hidden, true parameter values $\lambda=0.03 $, $ \mu=0.1 $ and $\nu=0.01$)}
    \label{fig:nobs}
\end{figure}
%
As expected, the quality of the estimation improves slowly with the number of observations. 
%
\subsubsection{Comparison with least-squares estimation}
\label{sec:ls}
%
As an additional investigation, we tried to use more information from the output transition matrix from the HMM algorithm, rather that only the top left $3$ transition probabilities. More precisely, instead of using an analytic inversion formula, we used a numeric least squares estimation method. We minimized the sum of squares of the differences between the entries of the output transition matrix and their theoretical expression for a LBDI process. New estimators are thus obtained as follows
%
\begin{equation}\label{eq::1}
(\hat\lambda^n, \hat\mu^n,\hat\nu^n)=\underset{\lambda, \mu, \nu}{\operatorname{argmin}} \sum_{i, j}^{N-1} \left(p_{i j}(\Delta t ; \lambda, \mu, \nu)-\widehat{p}_{i j}^n\right)^2.
\end{equation}
%
Note that we did not use the last row and column of the transition matrix as they are likely to have a biased expression due to the truncation.

To achieve the estimation of these parameters, we employ the Limited-memory Broyden-Fletcher-Goldfarb-Shanno optimization method  implemented in the R programming environment \cite{coppolalbfgs}. 
The outcomes and their corresponding standard errors ($100$ replications) are presented on Figure \ref{fig:least-squares} 
\begin{figure}[hpt]
    \centering
    \includegraphics[scale=0.8]{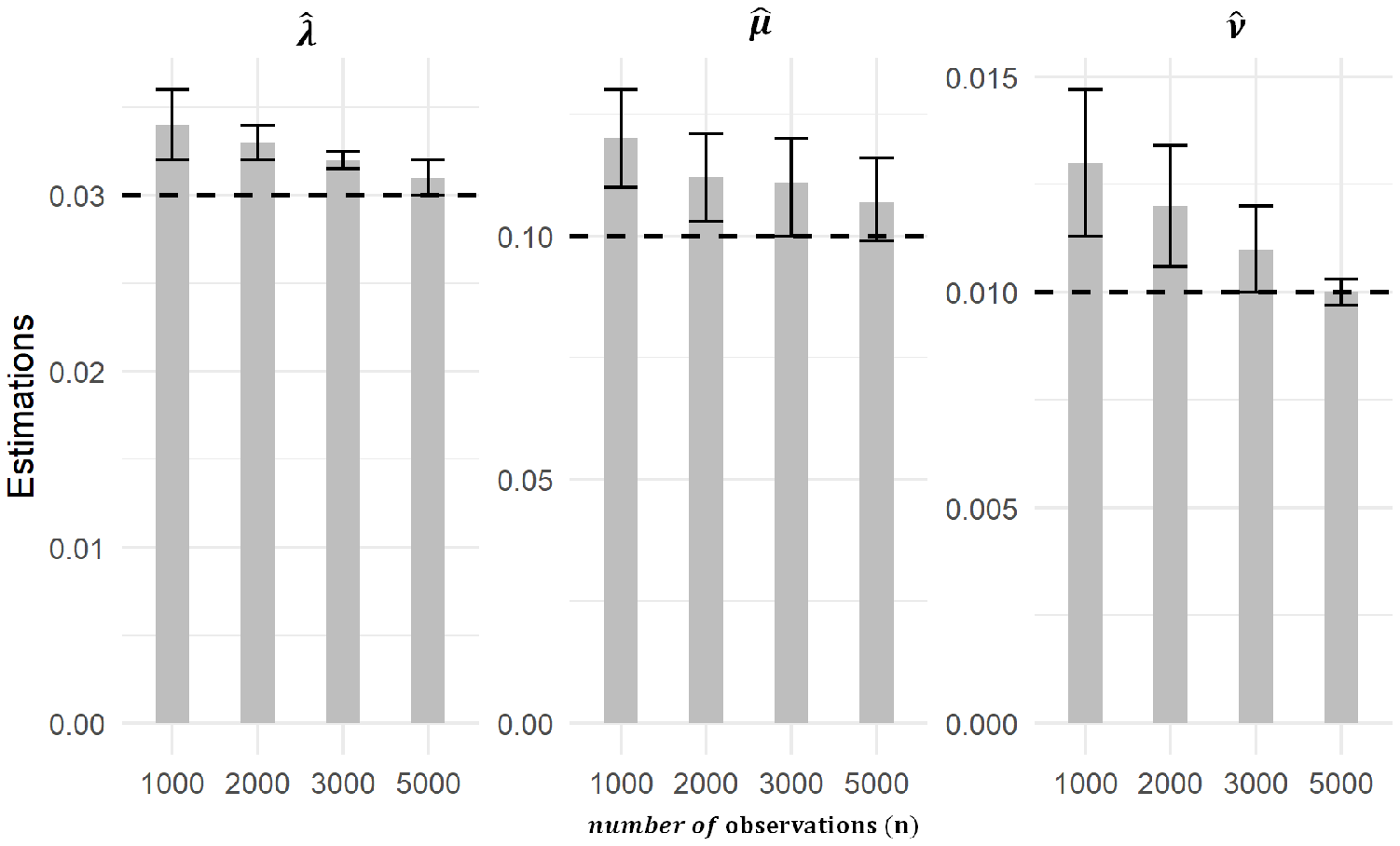} 
    \caption{Impact of the number of observations $n$ on parameter estimations and their confidence intervals using HMM with least-squares approximation ($N=5$, $100$ replications, synthetic data, infected hidden, true parameter values $\lambda=0.03 $, $ \mu=0.1 $ and $\nu=0.01$))}
        \label{fig:least-squares}
\end{figure}
%
(numerical values displayed in Tables \ref{table5.1}) for a  varying numbers of observations $n$ and on Figure \ref{fig:least-squares2} (numerical values displayed in Tables \ref{table:5.2}) for varying hidden state space truncation parameter $N$.
\begin{figure}[hpt]
    \centering
    \includegraphics[scale=0.8]{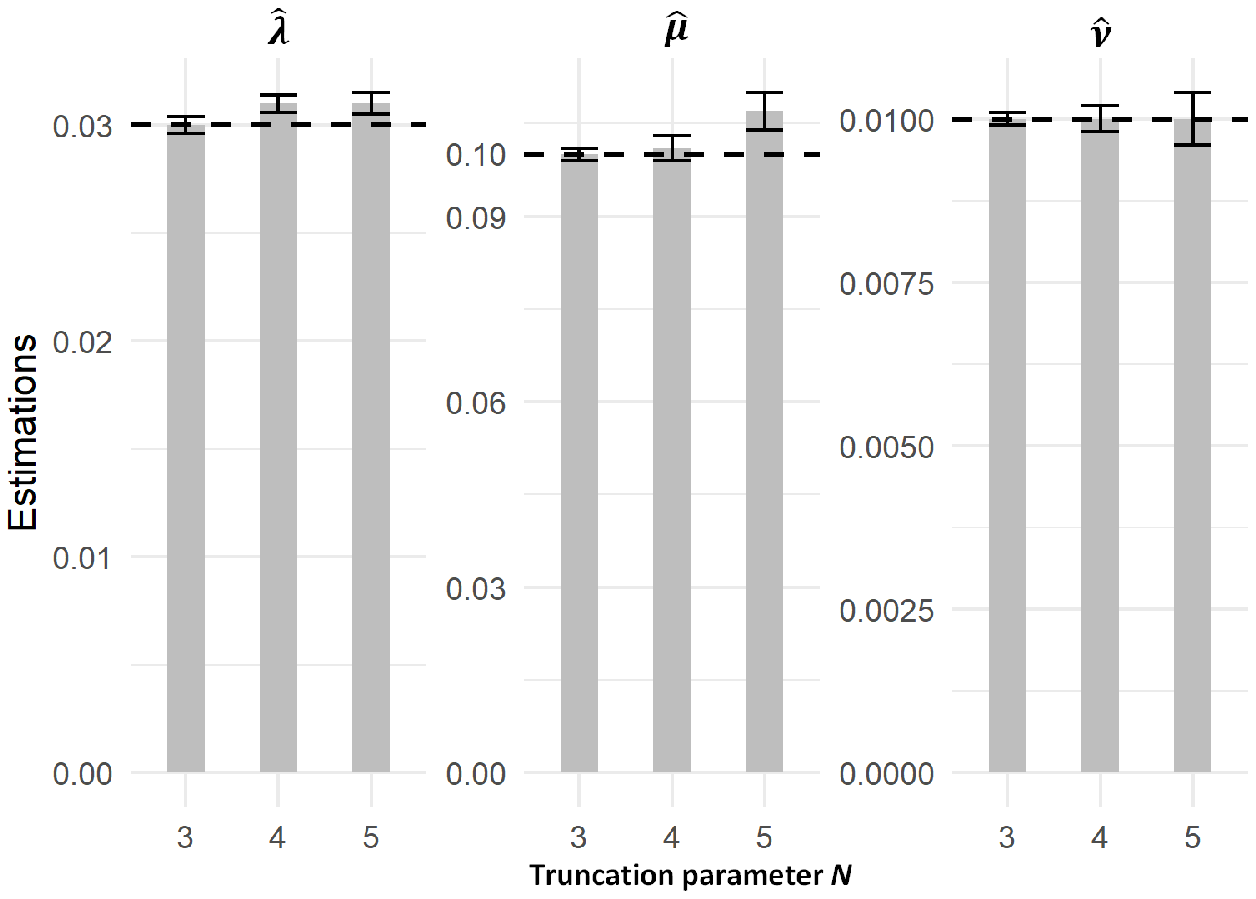} 
    \caption{Impact of the truncation parameter $N$ on parameter estimations and their confidence intervals using HMM with least-squares approximation ($H=5000$, $100$ replications, synthetic data, infected hidden, true parameter values $\lambda=0.03 $, $ \mu=0.1 $ and $\nu=0.01$)}
        \label{fig:least-squares2}
\end{figure}

The estimation method based on the combination of the HMM procedure and least-squares fitting exhibits significant sensitivity to truncation, that seems to deteriorate when $N$ grows. This could be explained by the fact that the output transition matrix from the HMM procedure is not constrained to fit a LBDI model. Therefore taking into account more parameters may make the deviation more obvious. 

The confidence intervals are significantly smaller than for analytical inversion, but the estimation is biased especially for a low number of observations. To investigate further the bias-variance compromise, we computed the mean squared error for both approaches. Results are displayed on Table \ref{tab:MSE-HMM-ls}.
%
\begin{table}[hpt]
\caption{Mean squared error using HMM with analytic inversion of the top left 3 transition probabilities (MSE a) and least-squares fitting (MSE ls) (synthetic data, infected hidden, $N=5$, $100$ replications, true parameter values $\lambda=0.03 $, $ \mu=0.1 $ and $\nu=0.01$)}
\label{tab:MSE-HMM-ls}
\begin{center}
\begin{tabular}{@{}ccccc@{}}
\toprule%
& \multicolumn{4}{@{}c@{}}{$\lambda$}  \\
\cmidrule{2-5} %
time horizon $H$&$1000$&$2000$&$3000$&$5000$ \\
number of observations $n$&$1001$&$2001$&$3001$& $5001$ \\
\midrule
MSE a   & $9.10^{-3} $ & $5.10^{-3}$& $3.10^{-3}$ & $\phantom{2.}10^{-3}$\\
MSE ls & $9.10^{-5}$  &$ 4.10^{-5}$ & $3.10^{-5}$&$2.10^{-5}$\\
\hline
\end{tabular}
\begin{tabular}{@{}ccccc@{}}
\toprule%
& \multicolumn{4}{@{}c@{}}{$\mu$}  \\
\cmidrule{2-5} %
time horizon $H$&$1000$&$2000$&$3000$&$5000$ \\
number of observations $n$&$1001$&$2001$&$3001$& $5001$ \\
\midrule
MSE a   & $2.10^{-3} $ & $9.10^{-4}$& $5.10^{-4}$ & $3.10^{-4}$\\
MSE ls & $3.10^{-3} $  &$ \phantom{9.}10^{-3}$ & $\phantom{5.}10^{-3}$&$7.10^{-4}$\\
\hline
\end{tabular}
\begin{tabular}{@{}ccccc@{}}
\toprule%
& \multicolumn{4}{@{}c@{}}{$\nu$}  \\
\cmidrule{2-5} %
time horizon $H$&$1000$&$2000$&$3000$&$5000$ \\
number of observations $n$&$1001$&$2001$&$3001$& $5001$ \\
\midrule
MSE a   & $\phantom{9.}10^{-5} $ & $5.10^{-6}$& $3.10^{-6}$ & $2.10^{-6}$\\
MSE ls & $9.10^{-5} $  &$ 5.10^{-5}$ & $3.10^{-5}$&$\phantom{2.}10^{-5}$\\
\hline
\end{tabular}
\end{center}
\end{table}
%
One can notice the the two approaches perform differently for the 3 parameters. For a number of observations around $2000$, the least-squares approach is more precise (by 2 orders of magnitude) than the analytical inversion for the intrinsic contamination parameter $\lambda$, but worse (by 1 order of magnitude) for the extrinsic contamination parameter $\nu$ and the declaration rate $\mu$. The difference seems to vanish for $\mu$ as the number of observations increases, but seems to remain of the same order of magnitude for the other two parameters.

\subsection{Typhoid data in Mayotte}
\label{sec:dataARS}
%
We now investigate a real data set provided by the ARS of Mayotte. It contains the daily counts of new declared cases of typhoid fever on the island between 2018 and 2022. This is the data set that motivated our study. 
We have $1816$ observations taking values between $0$ and $4$ with a total of $299$ recorded cases. The average number of declared cases is $59.8$ cases per year. Noteworthy is the observation that the year 2022 was characterized by a significantly more severe epidemic, with $100$ reported cases, see Fig. \ref{fig:my_label3} (where weekly counts are displayed) and Table \ref{table4}.
%
\begin{figure}[hpt]
    \centering
    \includegraphics[scale=0.4]{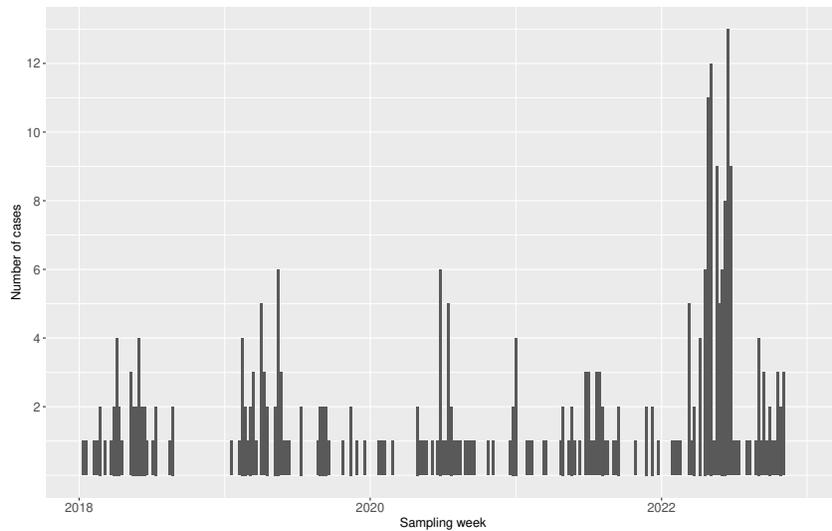} 
    \caption{Number of weekly cumulated new declared cases of typhoid fever in Mayotte between 2018 and 2022.}
    \label{fig:my_label3}
\end{figure}
%
\begin{table}[hpt]
\caption{Number of daily cumulated new declared cases of typhoid fever in Mayotte between 2018 and 2022}
\label{table4}
\begin{center}
\begin{tabular}{@{}cccccc@{}}
\toprule
observed value &$0$&$1$&$2$&$3$&$4$ \\
\midrule
number of observations &$1596$&$167$&$34$& $12$&$7$ \\
\hline
\end{tabular}
\end{center}
\end{table}
%
 The transitions for $0$ to $0$, $0$ to $1$ and $1$ to $0$ are the most frequently observed, constituting a cumulated proportion of $94.27\%$, see Table \ref{tab:Mayotte}. This guided our decision to consider only these transitions for the estimation of model parameters.
%
\begin{table}[hpt]
\caption{Proportions of observed daily transitions in declared cases of typhoid fever in Mayotte between 2018 and 2022}
\label{tab:Mayotte}
\begin{center}
\begin{tabular}{@{}ccccccc@{}}
\toprule
transition &$0\rightarrow0$ & $0\rightarrow1$ & $1\rightarrow0$ & $1\rightarrow1$ & $1\rightarrow2$ & other \\
\midrule
proportion &$\mathbf{79.62\%}$ & $\mathbf{7.05\%}$ & $\mathbf{7.60\%}$ & $1.15\%$ & $0.33\%$  & $4.25\%$\\
\hline
\end{tabular}
\end{center}
\end{table}

A stochastic model for the propagation of typhoid fever in Mayotte is well adapted to the low prevalence figures. Moreover, there are long lapses without cases followed by spontaneous surges of new cases, indicating the existence of an exogenous source of contamination. The simplest model corresponding to this case is thus the LBDI model under positive recurrent regime. As Typhoid fever is a notifiable disease in Mayotte, we consider that all cases are observed.
%
\subsubsection{Parameter estimation}
\label{sec:dataARS1}
%
Based on literature \cite{mushayabasa2011impact,pitzer2014predicting,pitzer2015mathematical,abboubakar2021mathematical}, the expert advice of typhoid specialists in Mayotte, and our synthetic data exploration, the stopping criterion for the HMM was set at $10^{-9}$, with a maximum number of iterations limited to $500$, and the truncation parameter was fixed at $N=4$. We chose initial parameters in the following ranges  $\lambda^{(0)}\in[0.05,0.08]$,  $\mu^{(0)}\in[0.11,0.25]$ and $\nu^{(0)}\in[0.015,0.03]$. This allows to initialize $P^{0}$, $Q^{0}$, $\psi^{0}$ and $\rho^0$. 

We selected the triple with the highest likelihood, namely
\begin{align*}
 \hat{\lambda}=0.054,  \quad \hat{\mu}=0.132 \quad \textrm{and} \quad  \hat{\nu}= 0.017,
\end{align*}
see Table \ref{table5} for complete results.

However, it is crucial to acknowledge the inherent precision limitations in these estimates, primarily due to the relatively modest scale of Mayotte dataset, comprising less than $2000$ observations. As the estimated parameters are significantly different from those we tested in the previous section, we conducted supplementary investigation on synthetic data in order to evaluated the estimation error in this case.
%
\subsubsection{Investigation of the estimation error}
\label{sec:dataARS2}
%
First, we performed simulations for $100$ trajectories with horizon $H=2000$ from a LBDI model with parameter values equal to our estimates $\lambda=0.054$, $\mu=0.132$, $\nu= 0.017$. Results are presented in Table \ref{table::5}.
%
\begin{table}[hpt]
\caption{Estimates $\hat\lambda^n,\hat\mu^{n}$ and $\hat\nu^{n}$  and $95\%$ confidence intervals for $100$ Trajectories with $2000$ observations (synthetic data, infected hidden, true parameter values $\lambda=0.054 $, $ \mu=0.132 $ and $\nu=0.017$)}
\label{table::5}
\begin{center}
\begin{tabular}{@{}cccc@{}}
\toprule
Parameter & Estimations & Confidence interval & mean squared error\\ 
\midrule
$\lambda$  &$0.066$   & $\ [0.051;0.081]\ $    & $8.10^{-3}$\\
$\mu$  & $ 0.130$   &  $\ [0.126;0.134]\ $&  \ \ \ $10^{-3}$  \\
$\nu$ &  $ 0.017$     & $[0.0161;0.0173]$ & $5.10^{-5}$ \\
\hline
\end{tabular}
\end{center}
\end{table}
%
As expected form our original study on synthetic data, the estimation error is quite high for this data set size. It is slightly better for the declaration delay $\mu$ than for the other two contamination parameters $\lambda$ (intrinsic contamination) and $\nu$ (extrinsic contamination). Note that given this level of error, the positive recurrence condition $\lambda<\mu$ still seems valid.
%
\section{Conclusion}
\label{sec:ccl}
%
We have proposed a methodology to estimate the unknown parameters of a LBDI process in the mathematically unusual but epidemiologically common case where only cumulated new isolated counts over given periods of time are available. Our approach relies on both analytical formulas and numerical iterations. We investigated the properties of the estimators on synthetic data and applied our procedure to a real data set of typhoid fever in Mayotte that motivated this work.

Typhoid fever is an endemic risk disease in the territory of Mayotte, as evidenced by the epidemic observed in 2022 in the north of the island. Due to the number of observed cases and the limited hospitalization capacity, it is constantly monitored by the DéSUS team of ARS Mayotte. The strengthening of the internal crisis management strategy and the provision of innovative tools to monitor epidemics remains a priority for ARS Mayotte. This approach is a first step toward the quantitative understanding of the propagation of typhoid or other similar water-borne diseases in Mayotte. We proposed a simplified model with few interpretable parameters. It would be interesting to split the data set given locally meaningful covariates such as easy access to potable water to investigate wether the coefficients guiding the propagation of the disease differ with the sanitary conditions of the population, to include a spatial propagation component, or to investigate forecasting aspects to try to characterize the risk of outbreak of an epidemic, to name just a few possible new directions with possibly high public health impact. 
\bibliography{mybib}
\appendix
%
\section{Jacobian matrix of mapping $g$}
\label{app:D}
%
We compute the Jacobian matrix $Dg$ of mapping $g$ defined by Eq. (\ref{eq:defg}) in Theorem \ref{main1}. Firstly, let us compute the partial derivatives of the functions $q$, $r$ and $u$ from the proof of Theorem \ref{main1} given in Section \ref{app:th2} above. One has 
\begin{align*}
 \frac{\partial q}{\partial p_{0,0}} &= -\frac{p_{0,1} \left(\ln(p_{0,0} ) +1 \right)}{\left(p_{0,0}\ln(p_{0,0} )\right)^2} W\left( \frac{p_{0,0}^{\left(\frac{p_{0,0}}{p_{0,1}} +1 \right)} \ln(p_{0,0} )}{p_{0,1}} \right)\\ 
 & + \frac{p_{0,1}}{p_{0,0}\ln(p_{0,0})} \left(\ln(p_{0,0})+1 \right) \left( \frac{1}{p_{0,1}} + \frac{1}{p_{0,0}\ln(p_{0,0})} \right) \frac{W\left( \frac{p_{0,0}^{\left(\frac{p_{0,0}}{p_{0,1}} +1 \right)} \ln(p_{0,0} )}{p_{0,1}} \right)}{W\left( \frac{p_{0,0}^{\left(\frac{p_{0,0}}{p_{0,1}} +1 \right)} \ln(p_{0,0} )}{p_{0,1}} \right) + 1}. 
\end{align*}
Since 
\begin{align}\label{eq23}
  W\left( \frac{p_{0,0}^{\left(\frac{p_{0,0}}{p_{0,1}} +1 \right)} \ln(p_{0,0} )}{p_{0,1}} \right) = q \frac{p_{0,0}\ln(p_{0,0})}{p_{0,1}},
\end{align}
one has
\begin{align*}
 \frac{\partial q}{\partial p_{0,0}} 
 &= \frac{q \left(\ln(p_{0,0})+1 \right)}{q p_{0,0}\ln(p_{0,0})+ p_{0,1}} \left( 1 + \frac{p_{0,1}}{p_{0,0}\ln(p_{0,0})}-\frac{q p_{0,0}\ln(p_{0,0})+ p_{0,1}}{p_{0,0}\ln(p_{0,0})} \right) \\ 
 &= \frac{q \left(\ln(p_{0,0})+1 \right)}{q p_{0,0}\ln(p_{0,0})+ p_{0,1}} \left( 1- q \right).
\end{align*}
Next, on has
\begin{align*}
 \frac{\partial q}{\partial p_{0,1}} 
 &= \frac{1}{p_{0,0}\ln(p_{0,0} )} W\left( \frac{p_{0,0}^{\left(\frac{p_{0,0}}{p_{0,1}} +1 \right)}\ln(p_{0,0} )}{p_{0,1}} \right)\\
 & -\frac{p_{0,1}}{p_{0,0}\ln(p_{0,0})} \left( \frac{p_{0,0}\ln(p_{0,0} )}{(p_{0,1})^2} +\frac{1}{p_{0,1}} \right)\frac{W\left( \frac{p_{0,0}^{\left(\frac{p_{0,0}}{p_{0,1}} +1 \right)}\ln(p_{0,0} )}{p_{0,1}} \right)}{W\left( \frac{p_{0,0}^{\left(\frac{p_{0,0}}{p_{0,1}} +1 \right)}\ln(p_{0,0} )}{p_{0,1}} \right) + 1}, \end{align*}
and using Equation (\ref{eq23}) we obtain
\begin{align*}
 \frac{\partial q}{\partial p_{0,1}} 
 &= \frac{q}{p_{0,1}} 
 -\frac{q}{q p_{0,0}\ln(p_{0,0}) + p_{0,1}} \left( \frac{p_{0,0}\ln(p_{0,0} )}{p_{0,1}} + 1 \right)\\  
 &= \frac{q}{p_{0,1} \left(q +\frac{p_{0,1}} {p_{0,0}\ln(p_{0,0})} \right)} \left( q -1 \right),
\end{align*}
and finally one has
\begin{align*}
 \frac{\partial q}{\partial p_{1,0}} = 0.
\end{align*}
The expressions of partial differentials of $ r$ are given as follows
\begin{align*}
\frac{\partial r}{\partial p_{0,0}} 
&=  \frac{1}{p_{0,0}\ln(q)} - \frac{\ln(p_{0,0})}{(\ln(q))^2 q }\frac{\partial q}{\partial p_{0,0}} \\
&= \frac{r}{p_{0,0}\ln(p_{0,0})} - r \frac{ \left(\ln(p_{0,0})+1 \right) \left( 1- q \right)}{\ln(q) \left( q p_{0,0}\ln(p_{0,0})+ p_{0,1} \right)}, \\
\frac{\partial r}{\partial p_{0,1}}
&= \frac{\ln(p_{0,0})}{(\ln(q))^2 q} \frac{\partial q}{\partial p_{0,1}} \\
&= r \frac{\left( q -1 \right)}{p_{0,1}\ln(q) \left(q +\frac{p_{0,1}} {p_{0,0}\ln(p_{0,0})} \right)}, \\
\frac{\partial r}{\partial p_{1,0}} &=0.
\end{align*}
This yields the following expressions of partial differentials of $ u$: 
\begin{align*}
\frac{\partial u}{\partial p_{0,0}} &= \frac{p_{1,0}}{p_{0,0}} = \frac{1- u}{p_{0,0}}, \\
\frac{\partial u}{\partial p_{0,1}} &=0, \\ 
\frac{\partial u}{\partial p_{1,0}} &= -\frac{1}{p_{0,0}}.
\end{align*}
Defining the following functions 
\begin{align*}
  \alpha &= \frac{p \left(\ln(p_{0,0} ) +1 \right)\left(p -1 \right)}{\Delta t \left( q \ p_{0,0}\ln(p_{0,0}) + p_{0,1} \right) \left( q-u \right)} \\ \nonumber          
 \beta &= \frac{1-p}{\Delta t p_{0,1} \left( q  + \frac{p_{0,1}}{ p_{0,0}\ln(p_{0,0})}  \right) \left( p-u \right)} \\ \nonumber
 \eta  &= \frac{1 -u}{\Delta t p_{0,0} u \left( q-u \right)},
\end{align*}
one can rewrite compactly  the partial differentials of $g_1$, $g_2$ and $g_3$ (i.e. of $g$) as follows:  
\begin{align*}
\frac{\partial g_1}{\partial p_{0,0}} 
&= \lambda \left( 1-u \right) \left( \frac{\alpha \Delta t  }{1-q}+ \frac{1}{p_{0,0} \left(q-u \right)} \right) - \left( 1-q \right) \left(\frac{\alpha }{q}+ \eta \right),  \\ 
\frac{\partial g_1}{\partial p_{0,1}} 
&=  \lambda \frac{\beta \Delta t  q \left(1-u \right)}{1-q} - \beta \left( 1-q \right), \\ 
\frac{\partial g_1}{\partial p_{1,0}} 
&=  -  \frac{\lambda}{p_{0,0} \left( q-u \right) }+ \eta \dfrac{1-q}{1-u}, \\ 
\frac{\partial g_2}{\partial p_{0,0}} 
&=  \mu \left(\alpha \Delta t  + \dfrac{1-q}{p_{0,0} \left(q-u \right)} \right) - \left( 1-u \right) \left( \frac{\alpha}{q} + \eta \right), \\
\frac{\partial g_2}{\partial p_{1,0}} 
&= \mu \beta \Delta t q - \beta \left( 1-u \right),\\ 
\frac{\partial g_2}{\partial p_{0,1}} 
&=  \mu \frac{1-q}{p_{0,0} \left(u-1 \right) \left(q-u \right)} + \eta, \\
\frac{\partial g_3}{\partial p_{0,0}} 
&=  \nu \left(\frac{1}{p_{0,0} \ln\left( p_{0,0} \right)} + \frac{\alpha \Delta t   \left( q-u \right)}{q \ln\left( q \right)} +\left( 1-u \right) \left( \frac{\alpha \Delta t  }{1-q}+ \frac{1}{p_{0,0} \left(q-u \right)} \right) \right) \\ 
&\quad - r \left(1-q \right) \left(  \frac{\alpha }{q} + \eta \right), \\
\frac{\partial g_3}{\partial p_{0,1}} 
&= \nu \beta \Delta t \left( \frac{q \left(1-u \right)}{1-q} - \dfrac{q-u}{\ln\left(q \right)} \right) - r \beta \left( 1- q \right), \\ 
\frac{\partial g_3}{\partial p_{1,0}} 
&= -  \frac{\nu}{p_{0,0} \left( q-u \right) }+ r \eta \frac{1-q}{1-u}. 
\end{align*}
%
\section{Numerical results}
\label{app:num}
%
We display here the various results of our numerical investigations in the form of tables instead of graphs, for the sake of completeness. All tables are commented in Section \ref{sec:num-simu} or \ref{sec:dataARS}.
%
\begin{table}[p]
\caption{Asymptotic standard deviation of the normalized estimators (synthetic data, infected observed, true parameter values $\lambda=0.03 $, $ \mu=0.1 $ and $\nu=0.01$)}
\label{table-sigma}
\begin{center}
\begin{tabular}{@{}cccc@{}}
\toprule
time lapse $\Delta t$&$1$&$7$&$30$\\
\midrule
$\Sigma_{11}^{1/2}$  & 3.190  &0.632 &0.377 \\
$\Sigma_{22}^{1/2}$  & 1.112  & 0.552 & 0.749 \\
$\Sigma_{33}^{1/2}$  & 15.309  & 1.264 &0.443  \\
\hline
\end{tabular}
\end{center}
\end{table}
%
\begin{table}[p]
\caption{Impact of the choice of the truncation parameter $N$ on the estimation (with empirical standard errors) using HMM with analytic parameter inversion for $100$ replications (synthetic data, infected hidden, true parameter values $\lambda=0.03 $, $ \mu=0.1 $ and $\nu=0.01$)}
\label{table2}
\begin{center}
\begin{tabular}{@{}cccc@{}}
\toprule
Truncation parameter $N$&$3$&$4$&$5$ \\
\midrule
$\hat\lambda^n$  & $0.031\ (4.10^{-3})$ &$ 0.030\  (4.10^{-3})$&$0.030\ (4.10^{-3})$ \\
$\hat\mu^n$  & $0.104\ \ \ (10^{-3})$  & $0.101\ \ \ (10^{-3})$ & $0.100\ \ \ (10^{-3})$\\
$\hat\nu^n$ & $0.010\   \ \ (10^{-4})$    & $ 0.010\ \ \ (10^{-4})$  &$0.010\ \ \ (10^{-4})$ \\
\hline
\end{tabular}
\end{center}
\end{table}
%
\begin{table}[p]
\caption{The impact of the time horizon $H$ on the estimation for a truncation parameter $N=5$ (with empirical standard errors)  using HMM with analytic parameter inversion for $100$ replications (synthetic data, infected hidden, true parameter values $\lambda=0.03 $, $ \mu=0.1 $ and $\nu=0.01$)}
\label{table3}
\begin{center}
\begin{tabular}{@{}ccccc@{}}
\toprule
time horizon $H$&$1000$&$2000$&$3000$&$5000$ \\
number of observations $n$&$1001$&$2001$&$3001$& $5001$ \\
\midrule
$\hat\lambda^n$  & $0.032\ \ \  (10^{-2}) $ & $0.028\  (6.10^{-3})$& $0.031\ (5.10^{-3})$ & $0.030\ (4.10^{-3})$\\
$\hat\mu^n$  & $0.117\  (5.10^{-3}) $  &$ 0.108\  (3.10^{-3})$ & $0.103\ (2.10^{-3})$&$0.100\ \ \  (10^{-3})$\\
$\hat\nu^n$ & $0.009\  (3.10^{-4})$   & $0.010\  (2.10^{-4})$  & $0.010\ (2.10^{-4})$& $0.010\ \ \  (10^{-4})$\\
\hline
\end{tabular}
\end{center}
\end{table}
%
\begin{table}[p]
\caption{Impact of the time horizon $H$ on parameter estimations and their confidence intervals using HMM with least-squares approximation for $100$ replications (synthetic data, infected hidden, true parameter values $\lambda=0.03 $, $ \mu=0.1 $ and $\nu=0.01$)}
\label{table5.1}
\begin{center}
\begin{tabular}{@{}ccccc@{}}
\toprule
time horizon $H$&$1000$&$2000$&$3000$&$5000$ \\
number of observations $n$&$1001$&$2001$&$3001$& $5001$ \\
\midrule
$\hat\lambda^n$  & $0.034\ (8.10^{-4}) $ & $0.033\  (5.10^{-4})$& $0.032\  (5.10^{-4})$ & $0.031\  (5.10^{-4})$\\
$\hat\mu^n$  & $0.120\  (5.10^{-3}) $  &$ 0.112\  (4.10^{-3})$ & $0.111\  (3.10^{-3})$&$0.107\  (2.10^{-3})$\\
$\hat\nu^n$ & $0.013\  (9.10^{-4})$   & $0.012\  (8.10^{-4})$  & $0.011\  (5.10^{-4})$& $0.010\  (4.10^{-4})$\\
\hline
\end{tabular}
\end{center}
\end{table}
%
\begin{table}[p]
\caption{Impact of the truncation parameter $N$ on parameter estimations and their confidence intervals using HMM with least-squares approximation  for $100$ replications ($H=5000$, synthetic data, infected hidden, true parameter values $\lambda=0.03 $, $ \mu=0.1 $ and $\nu=0.01$)}
\label{table:5.2}
\begin{center}
\begin{tabular}{@{}cccc@{}}
\toprule
Truncation parameter $N$&$3$&$4$&$5$ \\
\midrule
$\hat\lambda^n$  & $0.030\  (4.10^{-4})$ &$ 0.031\   (4.10^{-4})$&$0.031\  (5.10^{-4})$ \\
$\hat\mu^n$  & $0.100\ \ \   (10^{-3})$  & $0.100\  (2.10^{-3})$ & $0.107\  (2.10^{-3})$\\
$\hat\nu^n$ & $0.010\ \ \    (10^{-4})$    & $ 0.010\   (2.10^{-4})$  &$0.010\  (4.10^{-4})$ \\
\hline
\end{tabular}
\end{center}
\end{table}
%
\begin{table}[p]
\caption{Estimates $\hat\lambda^n,\hat\mu^{n}$ and $\hat\nu^{n}$ and log likelihood of the corresponding models for the different initial parameter values tested for Mayotte typhoid fever data set}
\label{table5}
\begin{center}
\begin{tabular}{@{}ccc@{}}
\toprule
$(\lambda^{(0)},\mu^{(0)},\nu^{(0)})$&$(\hat\lambda^{n},\hat\mu^{n},\hat\nu^{n})$& log-likelihood\\
\midrule
$(0.0500, 0.1100, 0.0150)$  & $(0.0500, 0.1100, 0.0150)$ & $ -422.8953$\\
$(0.0516, 0.1174, 0.0157)$ & $(0.0515, 0.1172, 0.0159)$  & $-423.0785$ \\
$(0.0532, 0.1247, 0.0166)$& $(0.0530, 0.1242, 0.0167)$    & $-423.1790$\\
$(0.0546, 0.1321, 0.0174)$& $(0.0547, 0.1322, 0.0171)$    &$\mathbf{-422.3025}$\\
$(0.0563, 0.1395, 0.0182)$& $(0.0561, 0.1402, 0.0181)$    & $-423.3840$\\
$(0.0579, 0.1468, 0.0189)$& $(0.0578, 0.1470, 0.0188)$    & $-423.4823$\\
$(0.0595, 0.1542, 0.0197)$& $(0.0594, 0.1540, 0.0200)$    & $-423.6156$\\
$(0.0611, 0.1616, 0.0205)$& $(0.0611, 0.1615, 0.0205)$    & $ -423.7838$\\
$(0.0626, 0.1689, 0.0213)$& $(0.0628, 0.1690, 0.0214)$    & $-423.9202$\\
$(0.0642, 0.1763, 0.0221)$& $(0.0643, 0.1762, 0.0220)$    & $-424.0504$\\
$(0.0658, 0.1837, 0.0229)$& $(0.0659, 0.1836, 0.0227)$    & $-423.7441$\\
$(0.0674, 0.1911, 0.0237)$& $(0.0673, 0.1910, 0.0235)$    & $-424.0065$\\
$(0.0689, 0.1984, 0.0245)$& $(0.0688, 0.1983, 0.0242)$    & $-424.3249$\\
$(0.0705, 0.2058, 0.0253)$& $(0.0702, 0.2057, 0.0252)$    & $-424.5890$\\
$(0.0721, 0.2132, 0.0261)$& $(0.0720, 0.2132, 0.0262)$    & $-425.1636$\\
$(0.0737, 0.2205, 0.0268)$& $(0.0742, 0.2204, 0.0270)$    & $-425.5052$\\
$(0.0753, 0.2279, 0.0276)$& $(0.0751, 0.2280, 0.0275)$    & $-426.0368$\\
$(0.0768, 0.2353, 0.0284)$& $(0.0769, 0.2352, 0.0281)$    & $ -426.2674$\\
$(0.0784, 0.2426, 0.0292)$& $(0.0785, 0.2424, 0.0293)$    & $ -426.5001$\\
$(0.0800, 0.2500, 0.0300)$& $(0.0800, 0.2500, 0.0300)$    & $-426.7330$\\
\hline
\end{tabular}
\end{center}
\end{table}

\section*{Declarations}
The authors declare that they have no competing interests. 
All co-authors have seen and agree with the contents of the manuscript. We certify that the submission is original work and is not under review at any other publication. 

\subsection*{Availability of data and materials}
The datasets used and/or analyzed and the codes developped during the current study available at \url{https://plmlab.math.cnrs.fr/bouzalma/hlbdi.git}

\subsection*{Acknowledgments}
We thank all of the people involved in this work especially Julien Balicchi and the DéSUS department of the ARS Mayotte for the data collection and availability. This project was partially supported by the LabEx NUMEV (ANR 2011-LABX-076) within the I-SITE MUSE.

\end{document}